\newtheorem{thrm}{Theorem}
\newtheorem{corollary}[thrm]{Corollary}
\newtheorem{conj}{Conjecture}
\newtheorem{claim}{Claim}[thrm]
\crefname{lem}{Lemma}{Lemmas}
\crefname{thm}{Theorem}{Theorems}
\crefname{cor}{Corollary}{Corollaries}
\crefname{prop}{Proposition}{Propositions}
\crefname{conj}{Conjecture}{Conjectures}
\crefname{claim}{Claim}{Claims}
\crefname{openproblem}{Open Problem}{Open Problems}
\def\eref#1{$(\ref{#1})$}
\def\sref#1{\S$\ref{#1}$}
\def\tref#1{Theorem~$\ref{#1}$}
\def\cjref#1{Conjecture~$\ref{#1}$}
\def\cyref#1{Corollary~$\ref{#1}$}
\g@addto@macro\bfseries{\boldmath}
\renewcommand\emptyset{\varnothing}
\DeclareMathOperator{\crad}{cr}
\title{Covering radius in the Hamming permutation space}
\author[1,2]{Kevin Hendrey\thanks{Supported by the Institute for Basic Science, No. IBS-R029-C1, and by a Monash University Postgraduate Publication Award.}}
 \author[2]{Ian M. Wanless\thanks{Supported by the Australian Research Council grant DP150100506.\\ This work is forthcoming in {\it The European Journal of Combinatorics}. It is licensed under the Creative Commons Attribution-NonCommercial-NoDerivatives 4.0 International License. To view a copy of this license, visit \href{https://creativecommons.org/licenses/by-nc-nd/4.0/}{https://creativecommons.org/licenses/by-nc-nd/4.0/}}}
\affil[1]{\small Discrete Mathematics Group, Institute for Basic Science (IBS), Daejeon, Republic of Korea}
\affil[2]{\small School of Mathematics, Monash University, Vic 3800, Australia\authorcr
	\small\tt Email: kevinhendrey@ibs.re.kr, ian.wanless@monash.edu}
\date{}
\renewcommand{\geq}{\geqslant}
\renewcommand{\leq}{\leqslant}
\renewcommand{\ge}{\geqslant}
\renewcommand{\le}{\leqslant}
\def\sym{\mathcal{S}}
\def\mm{\mathcal{M}}
\def\V{\mathcal{V}}
\def\W{\mathcal{W}}
\def\iwb{\rightarrow\infty}
\begin{document}

\maketitle

\begin{abstract}
  Let $\sym_n$ denote the set of permutations of
  $\{1,2,\dots,n\}$.  The function $f(n,s)$ is defined to be the
  minimum size of a subset $S\subseteq\sym_n$ with the property that for
  any $\rho\in\sym_n$ there exists some $\sigma\in S$ such that the
  Hamming distance between $\rho$ and $\sigma$ is at most $n-s$.
  The value of $f(n,2)$ is the subject of a conjecture by K\'ezdy and Snevily,
  which implies several famous conjectures about latin squares.

  We prove that the odd $n$ case of the K\'ezdy-Snevily Conjecture implies
  the whole conjecture. We also show that $f(n,2)>3n/4$ for all $n$, that
  $s!< f(n,s)< 3s!(n-s)\log n$ for $1\le s\le n-2$ and that 
\[f(n,s)>\left\lfloor \frac{2+\sqrt{2s-2}}{2}\right\rfloor \frac{n}{2}\]
if $s\geq 3$.
\end{abstract}
  
\section{Introduction}\label{s:intro}

Given a finite metric space $(X,d)$, the \emph{covering radius} $\crad(S)$ of a subset
$S$ of $X$ is the minimum real number $r$ such that balls of radius
$r$ centred at the points in $S$ cover $X$. A covering code for
$(X,d)$ is a subset with covering radius at most some specified
value. For practical applications, it is generally desirable to have a
covering code with few elements.  We examine the covering radius
problem for $(\sym_n,d_H)$, where $\sym_n$ is the set of permutations
of $[n]=\{1,2,\dots,n\}$ and $d_H$ is the Hamming distance (the number
of positions in which a pair of permutations disagree). See
\cite{Qui06} for background on covering and packing problems in this
space.

The function $f(n,s)$ is defined to be the minimum size of a subset of
$\sym_n$ that has covering radius at most $n-s$.  It is not hard to
show that $f(n,1)=\lfloor n/2\rfloor+1$ for all $n$; see for example
\cite{CK03,CW05}. However, the $s=2$ case is already difficult, and
has interesting connections to the study of latin squares.  A {\em
  latin square} of order $n$ is an {$n\times n$} array of $n$ symbols
in which each symbol occurs exactly once in each row and column. In a
latin square of order $n$, a {\em partial transversal of length
  $\ell$} is a set of $\ell$ entries containing no pair of entries
that lie in the same row or column or share the same symbol. A {\em
  transversal} is a partial transversal of length $n$ and a {\em near
  transversal} is a partial transversal of length $n-1$. See
\cite{transurv} for a survey on transversals, including a history of
the following two famous and long-standing conjectures in the area:

\begin{conj}\label{c:Ryser}
Each latin square of odd order has a transversal.
\end{conj}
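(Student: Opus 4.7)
Ryser's conjecture is famously open, with no direct proof in sight after more than fifty years, so any plausible plan must proceed indirectly. The natural route, in view of the abstract, is to deduce Ryser's conjecture from the K\'ezdy--Snevily conjecture on $f(n,2)$ via the covering-radius framework of the paper.

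First I would encode a latin square $L$ of odd order $n$ as a list of $n$ permutations $\pi_1,\dots,\pi_n\in\sym_n$, one per row, with $\pi_i(j)$ the symbol in cell $(i,j)$. A transversal of $L$ is then a $\sigma\in\sym_n$ such that $i\mapsto\pi_i(\sigma(i))$ is itself a permutation of $[n]$. The key observation to exploit is that a near-transversal exists precisely when some $\sigma$ lies within Hamming distance $n-2$ of the appropriate row-derived permutation, so a small covering code of $\sym_n$ at radius $n-2$ would restrict the global behaviour of $L$ enough to force a transversal.

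Second I would invoke the K\'ezdy--Snevily conjecture to turn this restriction into the existence of the desired $\sigma$. To avoid having to settle both parities, I would apply the odd-$n$ reduction promised in the abstract, so that the only remaining case of the K\'ezdy--Snevily conjecture to prove is precisely the one aligned with Ryser's setting. Along the way it would be natural to try to strengthen the lower bound $f(n,2)>3n/4$ of the abstract towards a matching upper bound, since the conjectured value is exactly the threshold needed to force a transversal.

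The hard part, and the reason this plan does not resolve Ryser's conjecture, is the K\'ezdy--Snevily conjecture itself: it is at least as strong as Ryser's. I would therefore not expect the paper to close the loop, only to make progress on the framework --- sharper bounds on $f(n,s)$ and the odd-to-full reduction --- within which Ryser's conjecture could ultimately be settled. A genuine direct attack would presumably require a new parity- or sign-based obstruction that distinguishes odd from even $n$ (mirroring the classical $\mathbb{Z}_{2k}$ Cayley-table obstruction for even orders), but no such obstruction is currently known.
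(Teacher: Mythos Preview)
The statement is a \emph{conjecture}, and the paper does not prove it; it is quoted as background and motivation. Your proposal correctly recognises this and correctly identifies the paper's actual contribution: not a proof of Ryser, but progress on the K\'ezdy--Snevily framework (the odd-to-even reduction and the bound $f(n,2)>3n/4$) within which Ryser would follow. In that sense your assessment matches the paper.

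One point of imprecision worth flagging: your sketch of the link between latin squares and covering radius is slightly off. The relevant encoding is not via near-transversals and row permutations directly. Rather, for a latin square $L$ of order $n$ one takes the $n$ ``symbol'' permutations $\tau_1,\dots,\tau_n$ with $\tau_k(i)=j$ iff $L(i,j)=k$. A permutation $\sigma$ at Hamming distance at least $n-1$ from every $\tau_k$ agrees with each $\tau_k$ in at most one row; a counting argument (each row contributes exactly one agreement in total) forces exactly one agreement per symbol, which is precisely a transversal. Thus $L$ has a transversal iff $\crad(\{\tau_1,\dots,\tau_n\})=n-1$, and $f(n,2)>n$ for odd $n$ immediately yields Ryser. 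Your phrase ``a near-transversal exists precisely when some $\sigma$ lies within Hamming distance $n-2$\ldots'' inverts the direction of the inequality and conflates the Brualdi and Ryser implications; the transversal case needs distance $\ge n-1$, not $\le n-2$.
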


\begin{conj}\label{c:Brualdi}
Every latin square has a near transversal.
\end{conj}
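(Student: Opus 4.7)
The plan is to attack Brualdi's conjecture by an extremal/contradiction approach, while acknowledging at the outset that this is a famously open problem, so any sketch I give is aspirational rather than confident. Assume for contradiction that some latin square $L$ of order $n$ has no near transversal, and let $T$ be a partial transversal of maximum length $\ell\le n-2$. Write $R$, $C$, $\Sigma$ for the rows, columns, and symbols avoided by $T$, each of size $n-\ell\ge 2$. By maximality, every cell $(r,c)\in R\times C$ must contain a symbol in $[n]\setminus\Sigma$, i.e., a symbol already used by $T$; this concentrated re-use of ``old'' symbols in the $(n-\ell)\times(n-\ell)$ subarray on $R\times C$ is the basic structural obstruction I would try to exploit.

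Next I would construct an augmentation digraph on $T\cup(R\times C)$. For each entry $e=(r,c,s)\in T$, consider the swap moves that replace $e$ by some entry $(r,c',s')$ with $c'\in C$: this creates a ``new unused column'' $c$ and ``new unused symbol'' $s$, forcing a compensating swap on another entry of $T$. Chaining these moves produces an alternating walk, and an augmentation of $T$ corresponds to a walk that eventually hits a cell of $R\times C$ whose symbol lies in $\Sigma$. The goal is to show that the out-degree in this auxiliary digraph is at least $n-\ell$ everywhere, so that a walk of length $\ell$ is forced to encounter an augmentation, contradicting maximality of $T$.

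The main obstacle, and the reason Brualdi's conjecture has resisted proof for so long, is the symbol constraint: a swap sequence that is valid on rows and columns can fail because of symbol collisions within the $\ell$ symbols used by $T$, and controlling rows, columns, and symbols simultaneously is precisely what makes latin square transversal problems hard. A more realistic route would be to leverage the K\'ezdy--Snevily Conjecture flagged in the abstract, which concerns $f(n,2)$ and is asserted there to imply the latin square transversal conjectures, including Brualdi's. I would therefore expect the hard step to be either the symbol-compatibility lemma for augmenting sequences in the direct combinatorial argument, or an effective upper bound on $f(n,2)$ for the covering-code reduction; absent a breakthrough on one of these, probabilistic or absorption techniques can give strong asymptotic results but do not settle every $n$.
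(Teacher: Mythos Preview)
This statement is a \emph{conjecture} in the paper, not a theorem: the paper does not prove it, and indeed explicitly presents it as open. There is therefore no ``paper's own proof'' to compare your attempt against. What the paper does do is explain that the K\'ezdy--Snevily Conjecture on $f(n,2)$ would imply \cref{c:Brualdi}, and then prove partial results toward that conjecture (notably $f(n,2)>3n/4$); it does not claim to settle Brualdi's conjecture itself.

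You have, to your credit, already recognised this: your write-up is explicitly ``aspirational'' and correctly identifies the symbol-compatibility obstruction as the genuine barrier in the direct augmenting-walk approach. But as a proof proposal it has a clear gap that you yourself name --- the out-degree bound in your augmentation digraph cannot be established without controlling symbol collisions, and you offer no mechanism for doing so. The alternative route you mention, via an upper bound on $f(n,2)$, is exactly the reduction the paper highlights, but the paper only proves a \emph{lower} bound on $f(n,2)$, which goes in the wrong direction for this implication. So neither branch of your sketch closes, and the honest summary is: the statement remains open, your proposal does not prove it, and the paper does not either.
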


The first of these conjectures is usually attributed to Ryser, while
the second is variously attributed to Brualdi, Ryser and Stein. In
contrast to \cref{c:Ryser}, there are at least $n^{n^{3/2}(1/2-o(1))}$
Latin squares of each even order $n$ that have no transversal
\cite{CW17}. The rows of any such Latin square form a set of
permutations that has covering radius exactly $n-2$ (see
\cite{CW05}). It follows that $f(n,2)\le n$ for all even $n$.  K\'ezdy
and Snevily made the following conjecture, motivated by the fact that it implies both
\cref{c:Ryser,c:Brualdi} (again, see \cite{CW05} for details).

\begin{conj}\label{cj:KS}
If $n$ is even, then $f(n,2)=n$; if $n$ is odd, then $f(n,2)>n$. 
\end{conj}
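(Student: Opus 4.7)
The overall plan is a two-stage attack: first, a \emph{reduction} showing that the odd-$n$ case of the conjecture implies the even-$n$ case, and second, a direct attack on the odd case. For the reduction, I would take a hypothetical code $S \subseteq \sym_n$ of size $n-1$ with covering radius at most $n-2$ (for some even $n$), and use it to produce a code in $\sym_{n+1}$ of size at most $n+1$ with covering radius at most $n-1$, contradicting the odd-case conjecture. The natural first step is the lift $S' = \{\sigma' : \sigma \in S\}$, where $\sigma'$ fixes $n+1$ and agrees with $\sigma$ on $[n]$; this automatically covers every $\rho \in \sym_{n+1}$ that fixes $n+1$. For a generic $\rho$ with $\rho^{-1}(n+1) = j \ne n+1$, I would form the reduced permutation $\rho^- \in \sym_n$ by replacing $\rho(j)$ with $\rho(n+1)$. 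The covering hypothesis on $S$ provides some $\sigma$ agreeing with $\rho^-$ in at least two positions; these agreements transfer to $\sigma'$ and $\rho$ \emph{unless} one of them occurs at position $j$. The task is then to augment $S'$ with at most two extra permutations that handle exactly the $\rho$ for which every witnessing $\sigma \in S$ agrees with $\rho^-$ only at $j$ and one other position; one hopes this ``bad'' set of $\rho$ is sufficiently structured (perhaps confined to a few cosets of a natural subgroup action) to be covered by a small number of universal extensions.

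For the odd case, the cyclic latin square no longer provides an extremal code, since odd-order cyclic groups admit transversals. Here I would look for an algebraic obstruction: associate to a covering code $S$ the $|S| \times n^2$ incidence matrix whose rows encode the entries of each $\sigma$, and try to show that for $|S| \le n$ a rank or dimension argument forces the existence of a permutation $\rho$ meeting every $\sigma \in S$ in at most one position. A parity condition specific to odd $n$, for instance coming from the sign of a permutation or from a determinantal identity that vanishes in characteristic $2$ only when $n$ is even, could then supply the extra margin separating $f(n,2)=n$ from $f(n,2)>n$.

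The main obstacle is the odd case: current techniques (such as those yielding the $f(n,2) > 3n/4$ bound mentioned in the abstract) appear to fall well short of the conjectured lower bound of $n+1$, and since the odd case subsumes Ryser's conjecture it is unlikely to be resolvable without a genuinely new algebraic or structural insight. The reduction step, by contrast, should be a tractable finite combinatorial problem once the ``bad'' augmentation set is pinned down precisely, and is where I would expect first progress.
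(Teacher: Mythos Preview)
The statement is a \emph{conjecture}; the paper does not prove it. What the paper does prove is the reduction step you outline first --- that the odd-$n$ case implies the even-$n$ case --- together with the lower bound $f(n,2)>3n/4$. So only your reduction can be compared to anything in the paper.

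For that reduction, your direction is the opposite of the paper's, and this is where your proposal has a genuine gap. You lift a putative code $S\subseteq\sym_n$ with $|S|=n-1$ (for even $n$) \emph{up} to $\sym_{n+1}$ and then try to patch the failures with at most two extra permutations. You correctly isolate the obstruction: a target $\rho\in\sym_{n+1}$ is bad when every $\sigma\in S$ that agrees with the reduced permutation $\rho^-$ in at least two places has one of those agreements at position $j=\rho^{-1}(n+1)$. But you give no reason to believe this bad set can be covered by two permutations independent of $\rho$; the position $j$ ranges over all of $[n]$, and for each $j$ the bad $\rho$ can be quite varied. ``One hopes this set is sufficiently structured'' is not an argument, and I see no mechanism that would force it.

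The paper avoids this entirely by projecting \emph{down} rather than lifting up. The key observation is that if $f(2k,2)<2k$ then the witnessing code $S$ has $|S|<2k$ and is therefore automatically \emph{non-transitive}: some pair $(x,y)$ satisfies $\sigma(x)\neq y$ for all $\sigma\in S$. After relabelling so that $x=y=n$, one sends each $\sigma\in S$ to $\sigma'\in\sym_{n-1}$ by splicing out the symbol $n$ (put $\sigma'(i)=\sigma(i)$ unless $\sigma(i)=n$, in which case $\sigma'(i)=\sigma(n)$). Now given any $\rho'\in\sym_{n-1}$, extend it to $\rho\in\sym_n$ by fixing $n$; a witness $\sigma\in S$ agrees with $\rho$ in at least two positions, and since $\sigma(n)\neq n=\rho(n)$ those agreements all lie in $[n-1]$ and survive the projection. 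Hence $f(2k-1,2)\le|S|\le 2k-1$, contradicting the odd case. No augmentation is needed because non-transitivity (which you get for free from $|S|<n$) guarantees a direction along which the projection loses nothing.

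Your assessment of the odd case is accurate: it subsumes Ryser's conjecture and the paper makes no attempt on it beyond the $3n/4$ bound.
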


In \cite{CW05} it was shown that
$\lfloor n/2\rfloor+2\le f(n,2)\le 4n/3+O(1)$ for all $n$.  In \cite{WZ13} it
was shown that $f(n,2)\le n+O(\log n)$ for all $n$ and that
$f(n,2)\le n+2$ whenever $n$ is divisible by $3$. In the next section we show
that it suffices to prove that $f(n,2)>n$ for odd $n$
(\cref{cj:KS,c:Ryser,c:Brualdi} would all follow). In \cyref{cy:3n/4},
we also significantly improve the lower bound on $f(n,2)$.
In the final section of the paper, we find new upper and lower
bounds on $f(n,s)$ for general $s$.

\section{Remarks on the K\'ezdy-Snevily conjecture}\label{s:KS}


In this section we consider the case at the heart of the
K\'ezdy-Snevily conjecture, namely $f(n,2)$. We will prove a new
lower bound. But first we show that half of \cjref{cj:KS} implies
the other half.

A set of permutations $S\subseteq\sym_n$ is {\em transitive} if for
every $x,y\in[n]$ there exists $\sigma\in S$ such that $\sigma(x)=y$.

\begin{thrm}\label{t:mono}
  If $S$ is a non-transitive subset of $\sym_n$ with
  $\crad(S)\le n-s$, 
  then $f(n-1,s)\leq |S|$.
\end{thrm}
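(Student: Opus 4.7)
The plan is to build explicitly, from $S$, a set $S'\subseteq\sym_{n-1}$ with $|S'|\le|S|$ and $\crad(S')\le (n-1)-s$, which yields the desired inequality $f(n-1,s)\le|S|$. The non-transitivity of $S$ provides a single coordinate that can be excised without destroying covering information, and the whole argument consists in making this formal.

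Since $S$ is non-transitive, I would fix $x,y\in[n]$ such that no $\sigma\in S$ satisfies $\sigma(x)=y$. For each $\sigma\in S$, let $j_\sigma$ be the unique index with $\sigma(j_\sigma)=y$; note $j_\sigma\ne x$. Define $\sigma^\star:[n]\setminus\{x\}\to[n]\setminus\{y\}$ by setting $\sigma^\star(j_\sigma)=\sigma(x)$ and $\sigma^\star(i)=\sigma(i)$ for $i\in[n]\setminus\{x,j_\sigma\}$; a direct check shows $\sigma^\star$ is a bijection. After identifying both $[n]\setminus\{x\}$ and $[n]\setminus\{y\}$ with $[n-1]$ via the order-preserving bijections, I view $\sigma^\star$ as an element of $\sym_{n-1}$ and set $S'=\{\sigma^\star:\sigma\in S\}$, giving $|S'|\le|S|$.

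To verify the covering radius bound, take any $\tau\in\sym_{n-1}$ and, through the identifications, regard $\tau$ as a bijection $[n]\setminus\{x\}\to[n]\setminus\{y\}$. Extend $\tau$ to $\hat\tau\in\sym_n$ by $\hat\tau(x)=y$. By hypothesis there exists $\sigma\in S$ with $|A|\ge s$, where $A=\{i\in[n]:\sigma(i)=\hat\tau(i)\}$. The crux is that $A$ avoids both $x$ and $j_\sigma$: we have $\sigma(x)\ne y=\hat\tau(x)$, and since $j_\sigma\ne x$ the value $\hat\tau(j_\sigma)$ lies in $[n]\setminus\{y\}$, so $\hat\tau(j_\sigma)\ne y=\sigma(j_\sigma)$. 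Hence for every $i\in A$, $\sigma^\star(i)=\sigma(i)=\hat\tau(i)=\tau(i)$, so $\sigma^\star$ agrees with $\tau$ in at least $s$ positions. This yields $\crad(S')\le(n-1)-s$.

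The only nontrivial design choice is the modification $\sigma\mapsto\sigma^\star$: it must simultaneously be a bijection between the correct deleted ground sets and preserve, rather than destroy, any agreement with an extension $\hat\tau$. Both potential obstructions dissolve once one notes that $\sigma$ can never agree with $\hat\tau$ at position $x$ (by non-transitivity) nor at position $j_\sigma$ (since $\hat\tau$ uses the value $y$ only at position $x$), so the swap at $j_\sigma$ costs nothing. Everything else is bookkeeping.
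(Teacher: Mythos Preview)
Your proof is correct and follows essentially the same approach as the paper: the paper also fixes $x,y$ with no $\sigma\in S$ sending $x$ to $y$ (relabelling so $x=y=n$), defines the same ``swap'' map $\sigma\mapsto\sigma^\star$ (their $g$), and checks that an extension $\hat\tau$ with $\hat\tau(x)=y$ agrees with $\sigma$ only at positions where $\sigma^\star$ agrees with $\tau$. The only cosmetic difference is that you work with order-preserving identifications of $[n]\setminus\{x\}$ and $[n]\setminus\{y\}$ with $[n-1]$ instead of relabelling.
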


\begin{proof}
  Since $S$ is non-transitive, there exists $x$ and $y$ in $[n]$
  such that $p(x)\neq y$ for all $p \in S$. By appropriate relabelling, 
  we may assume that $x=y=n$.
Consider the function $g$ which maps each permutation $p\in \sym_n$ to
the unique permutation $q\in \sym_{n-1}$ for which
\[ 
q(x)= \left\{ \begin{array} {ll}
  p(x) & \mathrm{if} \;\;p(x)\neq n, \\ 
  p(n) & \mathrm{if} \;\;p(x)=n,
\end{array} \right. 
\]
for all $x\in[n-1]$.
Let $p'$ be an arbitrary permutation in $\sym_{n-1}$, and let $p\in\sym_n$ be
the unique permutation such that $p(n)=n$ and $g(p)=p'$. Since
$\crad(S)\leq n-s$, there is some $q\in S$ such that $d_H(p,q)\leq n-s$.
Since $q\in S$, we know $q(n)\neq n$. Hence, there are at least
$s$ choices of $i\in[n-1]$ for which $q(i)=p(i)\neq n$.
It follows that $d_H(p',g(q))\leq n-1-s$. Therefore $\{g(q):q\in S\}$
is a subset of $\sym_{n-1}$ of size at most $|S|$ and covering radius
at most $n-1-s$.
\end{proof}

Part of the reason to be interested in \tref{t:mono} is because it
gives a weak version of monotonicity for $f(n,s)$ in its first
parameter.  It is known that $f(n,s)$ is not actually monotonic in
$n$, since $f(3,2)=6>f(4,2)=4$. On the other hand, $f(n,s)$ increases
monotonically in $s$ by definition. Another reason for interest in
\tref{t:mono} is the following implication:

\begin{corollary}
  If \cjref{cj:KS} is true for odd $n$, then it is true for all $n$.
\end{corollary}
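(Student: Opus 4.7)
The plan is to derive the even case of \cjref{cj:KS} from the odd case. The upper bound $f(n,2)\le n$ for even $n$ is already established in the introduction via Latin squares with no transversal, so I only need to show $f(n,2)\ge n$ for even $n$, under the assumption that $f(m,2)>m$ for every odd $m$.

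I would proceed by contradiction. Suppose $S\subseteq\sym_n$ satisfies $|S|<n$ and $\crad(S)\le n-2$, where $n$ is even. The natural dichotomy is whether $S$ is transitive, since \tref{t:mono} is tailored to the non-transitive case. If $S$ is transitive, then for any fixed $x\in[n]$ the evaluation map $\sigma\mapsto\sigma(x)$ from $S$ to $[n]$ is surjective, forcing $|S|\ge n$, a contradiction. If $S$ is not transitive, then \tref{t:mono} (with $s=2$) yields $f(n-1,2)\le|S|<n$. Since $n-1$ is odd, the assumed odd case of \cjref{cj:KS} gives $f(n-1,2)>n-1$, that is $f(n-1,2)\ge n$, again a contradiction. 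Either way we contradict the existence of such an $S$, so $f(n,2)\ge n$, and the even case of \cjref{cj:KS} follows.

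There is no real obstacle in this argument: \tref{t:mono} does essentially all of the work, and the only thing it does not directly handle is the transitive case, which is immediate. The mild subtlety is simply to notice that one must separate the transitive case out, and to convert the strict inequality $f(n-1,2)>n-1$ for the odd integer $n-1$ into the integer inequality $f(n-1,2)\ge n$ that contradicts $|S|<n$.
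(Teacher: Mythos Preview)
Your proposal is correct and follows essentially the same approach as the paper: both use $|S|<n$ to force $S$ to be non-transitive, then invoke \tref{t:mono} to obtain $f(n-1,2)\le|S|\le n-1$, contradicting the odd case of \cjref{cj:KS}. The only cosmetic difference is that the paper immediately deduces non-transitivity from $|S|<n$ rather than writing out the transitive case as a separate branch.
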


\begin{proof}
Assume 
that there is some positive integer $k$ for which $f(2k,2)\ne2k$.
As was mentioned in \sref{s:intro}, we know that $f(2k,2)\le2k$, so we are
assuming that $f(2k,2)<2k$.
This implies that there is some $S\subseteq \sym_{2k}$ with $|S|<2k$ and
$\crad(S)\leq 2k-2$.  A transitive subset of $\sym_{2k}$ must contain at
least $2k$ permutations, since for any $x\in[2k]$ there
must be a permutation in the set which maps 1 to $x$. Therefore $S$ is
non-transitive, so by \tref{t:mono}, $f(2k-1,2)\leq |S|\leq2k-1$.
We have shown that if \cjref{cj:KS} fails for $n=2k$ then it also fails for $n=2k-1$.
The result follows.
\end{proof}

Our next aim is to improve the lower bound on $f(n,2)$ from
\cite{CW05}. To do this we will take a graph theoretic approach (which
we will also use in \sref{s:gens} to find a lower bound on $f(n,s)$
for $s\ge3$). For any graph $G$ we will denote the vertices and edges of
$G$ by $V(G)$ and $E(G)$, respectively.
For any set of permutations $S\subseteq\sym_n$ we colour the
edges of the complete bipartite graph $G=K_{n,n}$ on vertex sets
$\V=\{v_1,\dots, v_n\}$ and $\W=\{w_1,\dots,w_n\}$. For each $p\in S$
and $i\in[n]$ we give the edge from $v_i$ to $w_{p(i)}$ a colour that
is unique to $p$.  The edges with any particular colour form a perfect
matching in $G$, and we may think of $S$ as corresponding to a set
$\mm_S$ of perfect matchings. Each edge of $G$ will receive a number
of colours equal to the number of those perfect matchings that it is
in.  An edge is {\it blank} if it has no colour, {\it monochromatic}
if it has exactly one colour and {\it polychromatic} if it has at
least two colours.  A matching $M$ of $G$ is {\it $k$-light} (with
respect to $\mm_S$) if no edge in $M$ has more than $k$ colours, and
{\it $k$-rainbow} (with respect to $\mm_S$) if no colour occurs on
more than $k$ edges of $M$. We write {\it light} and {\it rainbow} for
1-light and 1-rainbow respectively.

To show that $f(n,s)>k$ we must show that for an arbitrary set
$S\subseteq\sym_n$ with $|S|=k$ there is an $(s-1)$-rainbow perfect matching of $G$.
In the $f(n,2)$ case, we will also insist that the matching is light, because it assists the
proof. We will locate the required matching by taking a matching $M$
which is close to what we want, then arguing that if it is not already
what we want, then we can improve it. The improvement will come via
the common technique of switching on an alternating path or cycle. A
path/cycle $X$ is {\it alternating} (with respect to $M$) if it has
the property that among any two consecutive edges of $X$, precisely one of
them is in $M$.  To {\it switch} $M$ with respect to $X$, we remove $E(X)\cap
M$ from $M$ and replace these edges with $E(X)\setminus M$.


\begin{thrm}\label{t:newfn2}
Let $G$ be the complete bipartite graph $K_{n,n}$ and let $\mm_S$ be a set
of at most $3n/4$ perfect matchings of $G$ (not necessarily
disjoint). There is a perfect matching of $G$ that
is light and rainbow with respect to $\mm_S$
and which contains a maximum sized $0$-light matching of $G$.
\end{thrm}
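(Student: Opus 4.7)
My plan is to fix a maximum $0$-light matching $M_0$ of $G$, reduce the problem to finding a rainbow monochromatic perfect matching in the residual subgraph on vertices uncovered by $M_0$, and then establish the latter by an alternating-path switching argument.

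Set $m=|M_0|$ and $t=n-m$. Each of the at most $3n/4$ perfect matchings in $\mm_S$ contributes exactly $n$ edge--colour incidences, so at most $3n^2/4$ edges of $G$ carry any colour and at least $n^2/4$ edges are blank. Since any vertex cover of the blank subgraph must use at least $(n^2/4)/n=n/4$ vertices, K\"onig's theorem yields $m\geq n/4$ and hence $t\leq 3n/4$. Let $U_\V\subseteq\V$ and $U_\W\subseteq\W$ be the vertices uncovered by $M_0$ and set $G'=G[U_\V,U_\W]\cong K_{t,t}$. The maximality of $M_0$ forces every edge of $G'$ to be monochromatic or polychromatic. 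The task therefore reduces to finding a perfect matching of $G'$ whose edges are monochromatic with pairwise distinct colours, since combining such a matching with $M_0$ yields a light, rainbow perfect matching containing the maximum $0$-light matching $M_0$.

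For the core step I would take a monochromatic rainbow matching $M'$ of $G'$ of maximum size and suppose for contradiction that $|M'|<t$, so some $u\in U_\V$ is uncovered. I would then track alternating walks from $u$ whose ``forward'' (out-of-$M'$) edges are monochromatic with colours neither appearing in $M'$ nor repeated among themselves, and whose ``backward'' edges lie in $M'$; if such a walk reaches an uncovered vertex of $U_\W$, switching along it enlarges $M'$, contradicting the maximum choice. Otherwise, on the set of vertices reachable by such walks I would apply a Hall-type deficiency count, aiming to show that the matchings in $\mm_S$ that ``block'' escape---either via polychromatic edges at the boundary, colours already used by $M'$, or matchings fully absorbed by the reachable set---must number more than $3n/4$, a contradiction. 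If this count falls short for a given $M_0$, I would exploit the fact that any two maximum $0$-light matchings differ by blank alternating cycles, swapping $M_0$ along such a cycle to reshape $G'$ favourably.

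The main obstacle is the counting step that turns a blocked augmentation into a contradiction. Unlike in standard bipartite matching, the rainbow condition couples the walk's edge choices through a shared colour budget, so naive reachability does not by itself yield an augmentation. The bound $|\mm_S|\leq 3n/4$ is tight against the natural edge--colour incidence count, so closing the gap will likely require additional structural input, most plausibly the interplay between $M'$ and the choice of $M_0$ via blank alternating cycles; getting this interplay right is the crux of the argument.
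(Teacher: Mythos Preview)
Your reduction to the residual graph $G'=K_{t,t}$ is sound, but the proof has a genuine gap, and you name it yourself: the augmenting-walk argument inside $G'$ does not close. The rainbow constraint couples the forward edges of any such walk through a common colour budget, so reachability does not yield a Hall-type deficiency bound, and the fallback of ``reshaping $G'$'' by swapping $M_0$ along blank alternating cycles is only a hint, not an argument. By fixing $M_0$ first and then working entirely within $G'$, you have discarded exactly the structure that makes the count go through: the blank alternating paths that reach from the unmatched vertices back \emph{into} the region covered by $M_0$.

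The paper does not separate the problem into a blank part plus a residual. It takes a light rainbow matching $M$ maximising first the number of blank edges, then the total size, and argues directly that $M$ is perfect. From each unmatched side it grows sets $A_1,A_2$ via blank alternating paths (so $|A_i|\ge n/4+1$), and then shows that for every $v\in A_1$, $w\in A_2$ there are at least three ``relevant'' length-$3$ alternating paths $vxyw$ with $xy\in M$ and $vx,yw$ monochromatic in colours unused by $M$. The step you are missing is the recombination trick: no middle edge $xy$ can serve two such paths, because if $p_sxyp_t$ and $q_sxyq_t$ are both blocked (forcing $p_sx,yp_t$ to share a colour and likewise $q_sx,yq_t$), then the cross path $p_sxyq_t$ is an unblocked augmentation. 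This converts the rainbow obstruction into the inequality $3|A_1||A_2|\le n+1-|A_1|-|A_2|$, which is impossible. There is no analogue of this in your scheme once the blank alternating structure has been frozen into a fixed $M_0$.
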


The remainder of this section is devoted to proving \tref{t:newfn2}
via a sequence of intermediate claims.  Let $M$ be a light rainbow
matching of $G$ with as many blank edges as possible and, subject to
this restriction, as many edges as possible. Aiming for a
contradiction, assume that $M$ is not a perfect matching. Without loss
of generality, $v_1$ and $w_1$ are unmatched by $M$. Let $A_1$ be the
set of vertices $v$ such that there is an even length alternating path
$P_v$ of blank edges from $v_1$ to $v$.  By construction, $P_v$ must
begin with an edge that is not in $M$ and end with a (blank) edge in
$M$. Also, every vertex in $\V$ that lies on $P_v$ must itself be in
$A_1$.  Let $A_2$ be the set of vertices $w$ such that there is an
even length alternating path of blank edges from $w_1$ to $w$. The
properties of $A_2$ are analogous to those of $A_1$.

Of the edges incident with a vertex $u\in V(G)$, let $\beta(u)$ be the
number of blank edges, $\pi(u)$ the number of polychromatic edges and
let $\mu(u)=n-\beta(u)-\pi(u)$ be the number of monochromatic edges.
For $i\in \{1,2\}$ and $u\in A_i$ let $\mu'(u)$ be the number of
monochromatic edges incident to $u$ that are part of an alternating
path of length 2 from $u$ to some vertex in $A_i$ (by definition,
the first edge of such a path cannot be in $M$, since no vertex
of $A_i$ is matched by a monochromatic edge of $M$).

\begin{claim}\label{sizeA} $|A_1|\geq 1+\beta(v)+\mu'(v)$ for all $v\in A_1$ 
and $|A_2|\geq 1+\beta(w)+\mu'(w)$ for all $w\in A_2$.
\end{claim}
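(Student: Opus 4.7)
The plan is to associate, to each blank edge $vw\notin M$ at $v$ and to each monochromatic edge at $v$ counted by $\mu'(v)$, a ``partner'' vertex in $A_1\setminus\{v,v_1\}$, and then to combine these with the always-available members $v$ and $v_1$ of $A_1$. I write out the argument for $A_1$; the proof for $A_2$ is entirely symmetric, obtained by exchanging the roles of $\V$ and $\W$ (and of $v_1$ and $w_1$).

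The main step, and the main obstacle, is to show that for each blank edge $vw\notin M$ at $v\in A_1$ the $M$-edge $f_w$ at $w$ exists and is itself blank. If $w$ were $M$-unmatched then $P_v+vw$ would be a blank alternating augmenting path, and switching $M$ along it would produce a light rainbow matching with strictly more blank edges than $M$, contradicting the choice of $M$. Assume $w$ is matched, and suppose for contradiction that $f_w$ is not blank; by the lightness of $M$ it must be monochromatic. Consider $Q=P_v+vw+f_w$, an even-length alternating path from $v_1$ to the other endpoint $v_w$ of $f_w$. Switching $M$ along $Q$ removes from $M$ the (blank) $M$-edges of $P_v$ together with the monochromatic edge $f_w$, and inserts the (blank) non-$M$ edges of $P_v$ together with the blank edge $vw$. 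The resulting matching $M''$ has the same size as $M$; it remains light (only blank edges are added) and rainbow (a monochromatic edge is removed and no new coloured edges appear); yet it has one more blank edge than $M$, contradicting the choice of $M$. Hence $f_w$ is blank, and then the walk $Q$ (or, if $Q$ is not simple because $w\in P_v$, the prefix of $P_v$ ending at $v_w$) is an even-length alternating blank path from $v_1$ to $v_w$, witnessing $v_w\in A_1$.

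The remaining verifications are routine. Distinct blank non-$M$ edges at $v$ produce distinct partners, because $v_w$ determines the $M$-edge $f_w=wv_w$ and hence $w$; and $v_w\neq v$, for otherwise $f_w=vw\in M$ would contradict $vw\notin M$. An analogous construction, not requiring the switching argument since the existence of the target vertex is built into the definition of $\mu'$, assigns to each monochromatic edge $vw$ counted by $\mu'(v)$ a distinct partner $u_w\in A_1\setminus\{v\}$: take $u_w$ to be the other end of $w$'s $M$-edge, which is blank because it is the $M$-edge of a vertex of $A_1$. All of the $v_w$ and $u_w$ are $M$-matched, so all differ from the unmatched vertex $v_1$. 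To conclude: if $v=v_1$, all $\beta(v_1)$ blank edges at $v_1$ are non-$M$, yielding $1+\beta(v_1)+\mu'(v_1)$ distinct elements of $A_1$; if $v\neq v_1$, exactly one blank edge at $v$ lies in $M$ (the last edge of $P_v$), so the blank partners contribute only $\beta(v)-1$ vertices, but $v_1$ is now a distinct extra contributor, yielding $2+(\beta(v)-1)+\mu'(v)=1+\beta(v)+\mu'(v)$ distinct elements of $A_1$.
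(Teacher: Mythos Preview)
Your proof is correct and follows essentially the same approach as the paper: for each blank edge $vw$ at $v\in A_1$, show that $w$ is $M$-matched and its $M$-partner lies in $A_1$, then combine with the $\mu'(v)$ partners (which are in $A_1$ by definition) and with $v_1$. The only cosmetic difference is that the paper treats all $\beta(v)$ blank edges uniformly (including the $M$-edge at $v$ when $v\neq v_1$, whose partner is $v$ itself), whereas you separate out $v$ as an explicit contributor and restrict to non-$M$ blank edges.

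One small point worth tightening: your switching arguments on $P_v+vw$ and on $Q=P_v+vw+f_w$ tacitly assume these are simple paths, but you only address non-simplicity in the final sentence. In fact the issue never arises in the switching step: if $w\in V(P_v)$ then $w$ is an internal vertex and hence matched within $P_v$ by a blank edge, so both the ``$w$ unmatched'' case and the ``$f_w$ not blank'' case are vacuous there. The paper handles this cleanly by defining the intermediate path $P'$ as either $P_vw$ or a subpath of $P_v$, so that it is always simple; you might add a line to the same effect before invoking the switch.
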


\begin{proof}
Let $vw_i$ be a blank edge incident to some $v\in A_1$. By the
definition of $A_1$, there is an even length alternating path $P$ of
blank edges from $v_1$ to $v$. Since $G$ is bipartite, we can find
$P'$, an odd length alternating path of blank edges from $v_1$ to
$w_i$ (either $P'=Pw_i$ or $P'$ is some subpath of $P$). By our choice
of $M$, it cannot be increased by switching on $P'$, so $w_i$ is
matched by $M$ to some vertex $v_j$. Note that the first edge and last
edge of $P'$ are not in $M$, since $P'$ is an alternating path of odd
length and $v_1$ is not matched by $M$.
Hence $w_iv_j$ is not an edge of $P'$ and $v_j\notin V(P')$,
since every internal vertex of $P'$ is incident to an edge in
$M\cap E(P')$. It follows that $w_iv_j$ is blank, since otherwise the
matching obtained from $M$ by switching on $P'v_j$ would be a light
rainbow matching of the same size as $M$ but with more blank edges
than $M$. Hence, $P'v_j$ is an even length alternating path of blank
edges from $v_1$ to $v_j$, so $v_j\in A_1$.
Note that $P'v_j$ is simply a subpath of $P$ in the case when $P'$ is a subpath of $P$.

We have shown that each of the $\beta(v)$ vertices that are joined to $v$
by a blank edge is matched by $M$ to a vertex in $A_1$.
By definition, there are $\mu'(v)$ vertices that are joined to $v$ by a
monochromatic edge and matched  by $M$ to a vertex in $A_1$.
Also, $v_1$ is trivially in $A_1$, and is not matched by $M$.
Hence, $|A_1|\geq 1+\beta(v)+\mu'(v)$.
By symmetry $|A_2|\geq 1+\beta(w)+\mu'(w)$ for all $w\in A_2$.
\end{proof}

\begin{claim}\label{sizeb}
For $u\in V(G)$ we have $\beta(u)\geq n/4+\pi(u)$.
\end{claim}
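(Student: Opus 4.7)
The plan is a straightforward double-counting argument exploiting the hypothesis $|\mm_S|\leq 3n/4$. Every matching in $\mm_S$ meets the vertex $u$ in exactly one edge, so the total number of incidences between matchings in $\mm_S$ and edges at $u$ equals $|\mm_S|$.

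On the other hand, I would count this same quantity edge by edge. For each edge $e$ incident with $u$, let $c(e)$ denote the number of matchings in $\mm_S$ containing $e$ (equivalently, the number of colours on $e$). Then $c(e)=0$ if $e$ is blank, $c(e)=1$ if $e$ is monochromatic, and $c(e)\geq 2$ if $e$ is polychromatic. Summing and comparing the two counts gives
\[
\mu(u)+2\pi(u)\;\leq\;\sum_{e\ni u}c(e)\;=\;|\mm_S|\;\leq\;3n/4.
\]

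Substituting $\mu(u)=n-\beta(u)-\pi(u)$ into the left-hand side and rearranging yields $\beta(u)\geq n/4+\pi(u)$, as required. There is no real obstacle: the only ``idea'' is to remember that polychromatic edges contribute at least $2$ to the sum, which is exactly what lets us trade a factor of $\pi(u)$ against the slack between $n$ and the bound $3n/4$ on $|\mm_S|$.
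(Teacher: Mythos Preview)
Your argument is correct and essentially identical to the paper's: both derive $\mu(u)+2\pi(u)\le |\mm_S|\le 3n/4$ from the fact that each colour hits exactly one edge at $u$ while polychromatic edges carry at least two colours, then substitute $\mu(u)=n-\beta(u)-\pi(u)$ and rearrange.
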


\begin{proof}
There are at most $3n/4$ colours assigned to edges incident to $u$. No
colour is assigned to multiple edges incident to $u$, since each
colour induces a perfect matching. At least two colours are
assigned to each polychromatic edge, by definition. Hence, 
$\mu(u)\le 3n/4-2\pi(u)$, and
$$\beta(u)=n-\pi(u)-\mu(u)\geq n-\pi(u)-\left(\frac{3n}{4}-2\pi(u)\right)=\frac{n}{4}+\pi(u).\qedhere$$
\end{proof}

\begin{claim}\label{C}
  There are at most  $n+1-|A_1|-|A_2|$ edges in $M$ with no endpoint in $A_1\cup A_2$.
\end{claim}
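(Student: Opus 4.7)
The plan is to reduce the claim, via inclusion--exclusion, to the statement that no edge of $M$ has one endpoint in $A_1$ and the other in $A_2$, and then to derive this from an augmenting-path argument.

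First I would observe that every $v\in A_1\setminus\{v_1\}$ is matched by $M$: the non-trivial even-length alternating blank path from $v_1$ to $v$ ends with an edge of $M$, which must be the $M$-edge incident to $v$. The analogous statement holds for $A_2\setminus\{w_1\}$. Writing $Y$ for the number of edges of $M$ with one endpoint in $A_1$ and the other in $A_2$, inclusion--exclusion gives that the number of edges of $M$ meeting $A_1\cup A_2$ equals $(|A_1|-1)+(|A_2|-1)-Y$. Since $v_1$ is unmatched we have $|M|\le n-1$, so the number of edges of $M$ disjoint from $A_1\cup A_2$ is
\[ |M|+2+Y-|A_1|-|A_2|\le n+1+Y-|A_1|-|A_2|. \]
Thus it suffices to show $Y=0$.

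Suppose for contradiction that $vw\in M$ with $v\in A_1$ and $w\in A_2$, and let $P_1$ and $P_2$ be the even-length alternating blank paths from $v_1$ to $v$ and from $w_1$ to $w$ respectively. Each ends with the unique $M$-edge at its final vertex, which must be $vw$ in both cases. Deleting this last edge from each and concatenating $P_1$, the edge $vw$, and the reverse of $P_2$ produces an odd-length alternating walk $W$ from $v_1$ to $w_1$ all of whose edges are blank.

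The crux is to extract an alternating path from $W$. Because $G$ is bipartite and $W$ begins at $v_1\in\V$, any vertex revisited by $W$ appears at positions of the same parity, so cutting out the closed sub-walk between two such visits preserves alternation; iterating yields a simple alternating blank path $P^*$ from $v_1$ to $w_1$. Switching $M$ along $P^*$ gives a matching that is still light and rainbow (only blank edges are exchanged) but contains one more blank edge than $M$, contradicting the choice of $M$. The main obstacle is this walk-to-path reduction; once it is in hand, the rest of the argument is a direct count.
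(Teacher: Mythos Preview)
Your argument is correct and follows essentially the same route as the paper: count via inclusion--exclusion, then rule out any $M$-edge between $A_1$ and $A_2$ by producing an augmenting blank alternating path from $v_1$ to $w_1$ and switching. The paper simply asserts the existence of this path; your walk-to-path reduction (using bipartiteness to ensure repeated vertices occur at positions of equal parity, so that excising the closed sub-walk preserves alternation) is a careful justification of that step, not a different approach.
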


\begin{proof}
There are at most $n-1$ edges in $M$. Every vertex in
$A_1\cup A_2\setminus \{v_1,w_1\}$ is an endpoint of an edge in $M$. No edge of
$M$ is between a vertex $v\in A_1$ and a vertex $w\in A_2$, since
otherwise we could find a blank alternating path from $v_1$ to $w_1$,
by the definition of $A_1$ and $A_2$. Switching on this path would increase the
number of blank edges in $M$, which is a contradiction.
\end{proof}

Let $C$ be the set of colours assigned to edges in $M$ and let
$\overline{C}$ be the set of colours not assigned to any edge in
$M$. Let $B_M$ be the set of blank edges in $M$ that have no endpoint
in $A_1\cup A_2$. An alternating path $P$ is {\it relevant} if the
first edge of $P$ is not in $M$, every edge of $P$ is
monochromatic, no edge in $E(P)\setminus M$
is assigned a colour in $C$ and no edge in $E(P)\cap M$
is incident with a vertex in $A_1\cup A_2$. 

\begin{claim}\label{paths}
  For every vertex $v\in A_1$ and every vertex $w\in A_2$ there are at
  least three relevant alternating paths of length $3$ from $v$ to $w$.
\end{claim}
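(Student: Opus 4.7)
The plan is to parametrize each length-$3$ alternating path from $v$ to $w$ starting outside $M$ by its middle edge. Such a path has the form $v \to w' \to v' \to w$ with $w'v'\in M$ and outer edges $vw', v'w \notin M$, and relevance demands that $w'v'$ be monochromatic with both endpoints outside $A_1\cup A_2$, and that $vw'$ and $v'w$ be monochromatic with colours in $\overline{C}$. I will count candidates for the middle edge and subtract off the bad ones.

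The first step is a structural observation: every $M$-edge incident to a vertex of $A_1\cup A_2$ is blank. For any $u\in A_1\setminus\{v_1\}$, take an even-length alternating blank path from $v_1$ to $u$; its final edge lies in $M$ and is blank, and the symmetric statement applies at $A_2$. Consequently every monochromatic $M$-edge avoids $A_1\cup A_2$, and the set $C$ of colours on $M$-edges coincides with the colours of monochromatic $M$-edges, so $|C|=\mu_M$ where $\mu_M$ denotes the number of monochromatic edges of $M$. Moreover, repeating the argument from the proof of \cref{sizeA} shows that any $w'\in\W$ that is blank-adjacent to $v$ is matched by $M$ through a blank edge, hence is not the $\W$-endpoint of any monochromatic $M$-edge; symmetrically at $w$.

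Next I count bad middle edges. For a monochromatic $M$-edge $e=w'v'$, the $v$-side outer condition fails only if $vw'$ is polychromatic or monochromatic with colour in $C$, since the blank case was just excluded. Because the $w'$'s arising as $\W$-endpoints of distinct monochromatic $M$-edges are themselves distinct, at most $\pi(v)+\mu_C(v)$ monochromatic $M$-edges are bad at the $v$-side, where $\mu_C(v)$ is the number of monochromatic edges at $v$ with colour in $C$. The symmetric bound holds at $w$. Applying a union bound,
\[
  \#\{\text{relevant paths from } v \text{ to } w\} \ \geq\ \mu_M - \pi(v) - \pi(w) - \mu_C(v) - \mu_C(w).
\]

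The main obstacle is to show the right-hand side is at least $3$. My proposed route is to use \cref{sizeb} at both $v$ and $w$ (giving $\mu(u)+2\pi(u)\leq|\mm_S|\leq 3n/4$) together with the identity $\mu_C(u)+p_C(u)=|C|=\mu_M$, where $p_C(u)$ counts colour-$C$ incidences on polychromatic edges at $u$. The key coupling is that each colour $c\in C$ is present at $v$ on exactly one edge (monochromatic or polychromatic), and similarly at $w$; this lets the constraint $|\mm_S|\leq 3n/4$ be invoked twice to close the inequality. If the direct counting proves too loose in certain extreme configurations of colours between $v$ and $w$, the fallback is an exchange argument: a near-relevant path would permit a switching that either enlarges $M$ or increases its number of blank edges, contradicting the choice of $M$, and this should supply the remaining headroom needed to reach $3$.
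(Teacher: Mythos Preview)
Your counting scheme has a genuine gap: the inequality
\[
\mu_M - \pi(v) - \pi(w) - \mu_C(v) - \mu_C(w) \ \geq\ 3
\]
is simply false in general. Take the case where the matchings in $\mm_S$ are pairwise edge-disjoint. Then there are no polychromatic edges, so $\pi(v)=\pi(w)=0$, and for every colour $c\in C$ the unique $c$-edge at $v$ is monochromatic, giving $\mu_C(v)=|C|=\mu_M$ and likewise $\mu_C(w)=\mu_M$. Your lower bound collapses to $-\mu_M$, which is negative whenever $M$ has any coloured edge. The identities $\mu_C(u)+p_C(u)=\mu_M$ and $\mu(u)+2\pi(u)\le 3n/4$ that you propose to invoke cannot rescue this: in the edge-disjoint regime $p_C(u)=0$ and the second inequality places no useful constraint on $\mu_M$. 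Your fallback ``exchange argument'' is too vague to fill this hole; the obstruction is not an extremal configuration removable by one switch but a systematic double-subtraction of the bad middle edges.

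What you are missing is any use of $|A_1|$, $|A_2|$, $\mu'(v)$, $\mu'(w)$, $|B_M|$, or \cref{C}. The paper does \emph{not} start from the $\mu_M$ monochromatic $M$-edges and subtract. Instead it counts relevant length-$2$ paths from $v$ (at least $\mu(v)-|C|-|B_M|-\mu'(v)$, a quantity of order $n$, not $\mu_M$) and from $w$, and bounds $|E_v\cap E_w|$ by inclusion--exclusion with $|E_v\cup E_w|\le n+1-|A_1|-|A_2|$ from \cref{C}. The subtracted $\mu'$ and $|B_M|$ terms are then recouped with interest through $|A_i|\ge 1+\beta+\mu'$ (\cref{sizeA}), after which \cref{sizeb} closes the estimate. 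It is precisely this feedback through $|A_1|,|A_2|$ that makes the arithmetic work; any proof that ignores the sizes of $A_1$ and $A_2$ cannot succeed here.
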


\begin{proof}
Let $v$ be an arbitrary vertex of $A_1$ and let $w$ be an arbitrary
vertex of $A_2$. There are at least $\mu(v)-|C|$ monochromatic edges
incident to $v$ that have a colour in $\overline{C}$. Note that none
of these edges has an endpoint in $A_2$ since otherwise there would be
an alternating path from $v_1$ to $w_1$ including the edge, in which
every other edge was blank, and by switching on this path we
could improve $M$. Also none of these edges is incident to a vertex
which is unmatched by $M$, by a similar argument. There are $|B_M|$
edges from $v$ to vertices in $\W\setminus A_2$ that are matched by 
blank edges of $M$ to vertices in $\V\setminus A_1$. There are $\mu'(v)$
monochromatic edges between $v$ and vertices that are matched by $M$
to vertices in $A_1$. Hence, there are at least
$\mu(v)-|C|-|B_M|-\mu'(v)$ relevant alternating paths of length 2
beginning at $v$.  By symmetry, there are at least
$\mu(w)-|C|-|B_M|-\mu'(w)$ relevant alternating paths of length 2
beginning at $w$. Let $E_v$ be the set of edges of $M$ that are in
relevant alternating paths of length 2 beginning at $v$ and let $E_w$
be the set of edges of $M$ that are in relevant alternating paths of
length 2 beginning at $w$.
Using the previous claims, we see that
\begin{align*}
|E_v\cap E_w|&= |E_v|+|E_w|-|E_v\cup E_w|\\
&\geq \mu(v)-\mu'(v)+\mu(w)-\mu'(w)-2(|C|+|B_M|)-(n+1-|A_1|-|A_2|)\\
&\geq \mu(v)-\mu'(v)+\mu(w)-\mu'(w)-3(n+1-|A_1|-|A_2|)\\
&\geq 3|A_1|+n-\beta(v)-\pi(v)-\mu'(v)+3|A_2|+n-\beta(w)-\pi(w)-\mu'(w)-3n-3\\
&\geq 3+2\beta(v)+2\mu'(v)-\pi(v)+2\beta(w)+2\mu'(w)-\pi(w)-n\\
&\geq 3+2(n/4+\pi(v))-\pi(v)+2(n/4+\pi(w))-\pi(w)-n\\
&\geq 3.
\end{align*}
For each edge in $E_v\cap E_w$, there is a corresponding relevant
alternating path of length 3 from $v$ to $w$, and these paths are
internally disjoint.
\end{proof}

\begin{claim}\label{final}
  No edge of $M$ is in more than one relevant alternating path of
  length $3$ from a vertex in $A_1$ to a vertex in $A_2$.
\end{claim}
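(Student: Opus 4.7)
The plan is to suppose for contradiction that some edge $xy\in M$ (with $x\in\W$, $y\in\V$) is the middle edge of two distinct relevant length-$3$ paths, $P_a$ through $v_a,x,y,w_a$ and $P_b$ through $v_b,x,y,w_b$, with $v_a,v_b\in A_1$, $w_a,w_b\in A_2$ and $(v_a,w_a)\neq(v_b,w_b)$. The key move is to recombine the outer edges into four candidate paths $Q_{\alpha\beta}$ through $v_\alpha,x,y,w_\beta$ for $(\alpha,\beta)\in\{a,b\}^2$. I would first check that each $Q_{\alpha\beta}$ is itself a relevant length-$3$ path: the edges $v_\alpha x$ and $yw_\beta$ are monochromatic with colours in $\overline{C}$ (inherited from $P_\alpha$ and $P_\beta$ respectively), and $xy$ retains its monochromaticity and its non-incidence with $A_1\cup A_2$ from either original path.

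Next, for each $(\alpha,\beta)$ I would extend $Q_{\alpha\beta}$ on both sides by a blank alternating path from $v_1$ to $v_\alpha$ and from $w_\beta$ to $w_1$ (which exist by definition of $A_1$ and $A_2$) to form an augmenting structure between the two unmatched vertices of $M$. Switching $M$ along this structure removes $xy$, inserts the monochromatic edges $v_\alpha x$ and $yw_\beta$, and swaps blank edges along the two side paths; the standard parity count on even-length alternating paths (as used in the proofs of the preceding claims) gives $|M'|=|M|+1$ with the same number of blank edges, and $M'$ is automatically light. Since $M$ was chosen to maximise blanks first and then size, $M'$ cannot be rainbow; and since the colours of $v_\alpha x$ and $yw_\beta$ both lie in $\overline{C}$ and so cannot conflict with any surviving colour of $M$, the only obstruction is that the colours of $v_\alpha x$ and $yw_\beta$ must coincide. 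This forced coincidence must therefore hold for every one of the four $(\alpha,\beta)\in\{a,b\}^2$.

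Writing $\gamma$ for the common colour of $v_a x$ and $yw_a$ (extracted from $Q_{aa}$), the identities from $Q_{ab}$ and $Q_{ba}$ then force the colours of $yw_b$ and $v_b x$ to equal $\gamma$ as well. Since colours correspond to perfect matchings, the matching of colour $\gamma$ contains all four edges $v_a x$, $v_b x$, $yw_a$ and $yw_b$; the fact that $v_a x$ and $v_b x$ share vertex $x$ forces $v_a=v_b$, and symmetrically $w_a=w_b$, contradicting $(v_a,w_a)\neq(v_b,w_b)$.

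The main technical point I expect to have to handle carefully is the switching step: one must verify that the augmenting structure yields a bona fide matching of the claimed size and blank count, even when the two blank side-paths share vertices with each other or pass through $v_\beta$ or $w_\alpha$. The vertices $x$ and $y$ themselves are automatically excluded from any blank alternating reachability set because $xy$ is monochromatic (a vertex's matched edge in $M$ is unique), and any remaining overlaps can be resolved by selecting simple subpaths of the blank alternating paths, in the same spirit as the choice ``either $P'=Pw_i$ or $P'$ is some subpath of $P$'' used in the proof of \cref{sizeA}.
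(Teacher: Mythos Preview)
Your argument is correct and follows essentially the same route as the paper's: assume two relevant length-$3$ paths share the middle edge $xy$, extend each candidate path by blank alternating paths to $v_1$ and $w_1$, and use the maximality of $M$ to force the two outer monochromatic edges to share a colour. The only cosmetic difference is the endgame: the paper observes directly that the mixed path $p_sxyq_t$ has distinctly coloured outer edges (so switching on it yields a larger light rainbow matching), whereas you apply the colour-coincidence argument to all four recombinations and deduce $v_a=v_b$, $w_a=w_b$.
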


\begin{proof}
Assume for contradiction that two such paths $P$ and $Q$ have the same
middle edge. Let $P=p_sxyp_t$ and let $Q=q_sxyq_t$, with $p_s,q_s\in A_1$
and $p_t,q_t\in A_2$. By the definition of $A_1$ and $A_2$, we can
extend $P$ to an alternating path from $v_1$ to $w_1$ in
which all but the three edges of $P$ are blank. Switching on this
alternating path would produce a light matching $M'$ that is larger
than $M$ and has the same number of blank edges.  By our choice of $M$
we know that $M'$ cannot be rainbow, and the reason must be that the
edges $p_sx$ and $yp_t$ are assigned the same colour. By a similar
argument, $q_sx$ and $yq_t$ are assigned the same colour. Since $P$
and $Q$ are distinct paths and each colour defines a matching, $p_sx$
and $yq_t$ are assigned different colours. By the definition of $A_1$
and $A_2$, the path $P'=p_sxyq_t$ can be extended to an alternating
path from $v_1$ to $w_1$ in which all but the three edges of $P'$ are
blank. By switching on this path, we can increase $|M|$ without
reducing the number of blank edges in $M$, a contradiction.
\end{proof}

\begin{proof}[Proof of \tref{t:newfn2}]
By Claims \ref{paths} and \ref{final}, there are at least
$3|A_1||A_2|$ edges in $M$ with no endpoint in $A_1\cup A_2$. So
$3|A_1||A_2|\le n+1-|A_1|-|A_2|$, by Claim \ref{C}. For $i\in \{1,2\}$ we have
$|A_i|\geq n/4 +1$ by Claims \ref{sizeA} and \ref{sizeb}. Hence
\begin{align*}
0&\ge 3|A_1||A_2|+|A_1|+|A_2|-n-1
\ge 3\left(\frac{n}{4}+1\right)^2+2\left(\frac{n}{4}+1\right)-n-1
=\frac{3n^2}{16}+n+4,
\end{align*}
which is false for all positive $n$.
\end{proof}

\begin{corollary}\label{cy:3n/4}
$f(n,2)>{3n}/{4}$ for all positive integers $n$.
\end{corollary}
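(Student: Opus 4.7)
The plan is to derive \cyref{cy:3n/4} as an immediate consequence of \tref{t:newfn2} through the graph-theoretic dictionary set up just before that theorem. First I would take an arbitrary $S\subseteq\sym_n$ with $|S|\le 3n/4$ and form the edge-coloured $G=K_{n,n}$ in which each permutation $p\in S$ contributes a perfect matching whose edges carry a colour unique to $p$. Since $|\mm_S|=|S|\le 3n/4$, the hypothesis of \tref{t:newfn2} is met, so there exists a perfect matching $M$ of $G$ that is rainbow with respect to $\mm_S$ (the extra ``light'' conclusion is not needed here).

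Next I would translate $M$ back into a permutation. The matching $M$ determines $\rho\in\sym_n$ by setting $\rho(i)=j$ whenever $v_iw_j\in M$. Recall that the edge $v_iw_{p(i)}$ carries the colour associated with $p$, so the rainbow property asserts that for every $p\in S$ there is at most one index $i$ for which $\rho(i)=p(i)$. This is exactly the statement that $d_H(\rho,p)\ge n-1$ for every $p\in S$, hence $\crad(S)\ge n-1>n-2$.

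From here the conclusion is immediate: no subset of $\sym_n$ of size at most $3n/4$ can have covering radius as small as $n-2$, so $f(n,2)>3n/4$. I do not expect any genuine obstacle in this deduction; the substantive work has already been carried out in the proof of \tref{t:newfn2}. The only remaining tasks are to confirm that a rainbow perfect matching in the coloured $K_{n,n}$ translates precisely to the Hamming-distance condition defining $f(n,2)$, and to observe that the inequality $|\mm_S|\le 3n/4$ permitted by the theorem covers every integer value of $|S|$ up to $\lfloor 3n/4\rfloor$, which is exactly the range one must rule out to obtain the strict bound $f(n,2)>3n/4$.
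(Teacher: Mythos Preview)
Your proposal is correct and follows essentially the same argument as the paper: take $S$ with $|S|\le 3n/4$, apply \tref{t:newfn2} to obtain a rainbow perfect matching, and read off a permutation at Hamming distance at least $n-1$ from every element of $S$. The paper's proof is just a terser version of exactly this.
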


\begin{proof}
Let $S\subseteq\sym_n$ with $|S|\leq 3n/4$.  By \tref{t:newfn2}, there
is a rainbow perfect matching $M$ with respect to $\mm_S$. Let $p$ be
the permutation such that $M=\{v_iw_{p(i)}\}$. Since $M$ is rainbow, 
$p$ has Hamming distance at least $n-1$ from every permutation in $S$.
\end{proof}



\section{Bounds on $f(n,s)$ for general $s$}\label{s:gens}

It is trivial that $f(n,n)=f(n,n-1)=n!$.  In this section we prove new
upper and lower bounds on $f(n,s)$ for $3\le s\le n-2$.  Note that
Keevash and Ku \cite{KK06} (with a subsequent correction by Aw
\cite{Aw14}) used probabilistic methods to provide a lower bound
for the covering radius of sets of
permutations $S\subseteq\sym_n$ for which no edge occurs in more than
a given number $k$ of the matchings in $\mm_S$.

Let $B(n,k)$ be the number of permutations in the ball of radius $k$
around the identity in $\sym_n$. Then
\[
B(n,k)=\sum_{i=0}^k\binom{n}{i}d_i\approx \frac{n!}{e}\sum_{i=0}^k\frac1{(n-i)!},
\]
where $d_i$ is the number of derangements in $\sym_i$.
In \cite{CW05} it was noted that
$f(n,s)\ge n!/B(n,n-s)$. We now show a similar upper bound, involving
the Harmonic number 
$H_b={1}+\frac{1}{2}+\dots+\frac{1}{b}\approx \log b+\gamma$.
We also need the notion of a {\it fractional cover}, which is an
assignment of real weights to the vertices of a hypergraph that
results in every hyperedge having total weight of at least 1. The {\it
  fractional covering number} $\tau^*$ is the minimum total weight of
a fractional cover. A {\it cover} is a fractional cover in which every
weight is 0 or 1.  The {\it covering number} $\tau$ is the minimum
total weight of a cover.

\begin{thrm}
Let $b=B(n,n-s)$ for $1\le s\le n$. Then $n!/b \le f(n,s) \le H_bn!/b$.
\end{thrm}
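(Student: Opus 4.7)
The lower bound $n!/b\le f(n,s)$ is the standard volume bound: each ball of radius $n-s$ in $(\sym_n,d_H)$ contains $b$ permutations, so a covering code of radius $n-s$ requires at least $n!/b$ elements.

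For the upper bound I would phrase the problem as set cover on the universe $\sym_n$ with sets $\{B(\rho,n-s):\rho\in\sym_n\}$, each of size $b$: a covering code is precisely a collection of permutations whose balls cover $\sym_n$. By the transitive action of $\sym_n$ on itself, every element of $\sym_n$ lies in exactly $b$ of these sets, so assigning weight $1/b$ to every set yields a fractional cover of total weight $n!/b$; in particular the fractional covering number is at most $n!/b$. To convert this fractional bound into an integer one I would invoke the classical greedy set-cover algorithm with its Chv\'atal--Lov\'asz--Stein analysis: when every set has size at most $d$, greedy produces a cover whose size is at most $H_d$ times the fractional optimum. With $d=b$, the greedy algorithm therefore outputs a covering code of size at most $H_b\cdot n!/b$, as required.

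The main obstacle is essentially making the greedy/charging analysis yield the precise harmonic factor $H_b$ rather than the slightly weaker $1+\ln b$ one obtains from a direct probabilistic argument (include each permutation independently with probability $(\ln b)/b$, then patch each uncovered point). The needed charging is standard: assign cost $1/k$ to each permutation covered during a greedy step that covers $k$ new permutations; for any particular ball $B(\rho,n-s)$, ordering its elements by when they become covered shows that their costs sum to at most $1+\frac{1}{2}+\cdots+\frac{1}{b}=H_b$, and pairing this with the fractional cover of weight $n!/b$ yields the bound $H_b\cdot n!/b$ on the number of greedy steps.
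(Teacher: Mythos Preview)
Your proof is correct and follows essentially the same route as the paper. The paper sets up the equivalent hypergraph vertex-cover formulation, notes that the hypergraph is $b$-regular and $b$-uniform so that the fractional covering number $\tau^*$ equals $n!/b$, and then simply cites Lov\'asz's theorem that $\tau\le H_b\,\tau^*$; your greedy/charging sketch is precisely the standard proof of that theorem.
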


\begin{proof}
We construct a hypergraph $H$ whose vertices are the permutations in $\sym_n$.
For each $\sigma\in\sym_n$ we add an edge to $H$ which consists of all
of the permutations in the ball of radius $n-s$ around $\sigma$.
The resulting hypergraph is $b$-regular and $b$-uniform, 
so its fractional covering number $\tau^*$ is $n!/b$, with an optimal
covering being obtained by assigning a weight of $1/b$ to every vertex.
By construction, $f(n,s)$ is the covering number $\tau$ of $H$.
By a result of Lov\'asz~\cite{Lov75}, we know that $\tau^*\le\tau\le\tau^*H_b$,
from which our theorem follows.
\end{proof}

\begin{corollary}
$\frac{2s}{s+1}s!< f(n,s)< 3s!(n-s)\log n$ for $1\le s\le n-2$.
\end{corollary}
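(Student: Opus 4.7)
The plan is to squeeze $f(n,s)$ between the bounds $n!/b$ and $H_b\,n!/b$ from the preceding theorem, where $b=B(n,n-s)$ is the number of permutations of $[n]$ with at least $s$ fixed points. Write $a_j:=\binom{n}{j}d_{n-j}$ for the count of permutations with exactly $j$ fixed points, and $e_m:=d_m/m!$, so that $b=\sum_{j\ge s}a_j$ and $a_s=(n!/s!)\,e_{n-s}$. The key numerical facts I will use are $e_m\le 1/2$ and $e_m\ge 1/3$ for every $m\ge 2$, with equality in the first at $m=2$ and in the second at $m=3$.

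For the lower bound I begin from the moment identity $\sum_{j\ge s}\binom{j}{s}a_j=n!/s!$, which is a rephrasing of $\mathbb{E}\bigl[\binom{X}{s}\bigr]=1/s!$ for $X$ the number of fixed points of a uniformly random permutation. Using $\binom{j}{s}\ge s+1$ for $j\ge s+1$ gives $(s+1)b\le n!/s!+s\,a_s$. Combined with $a_s\le(n!/s!)/2$, this yields $b\le\frac{(s+2)n!}{2(s+1)s!}$ and hence $n!/b\ge\frac{2(s+1)s!}{s+2}>\frac{2s}{s+1}s!$, the last step being the elementary $(s+1)^2>s(s+2)$.

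The upper bound is more delicate. The direct route — bound $n!/b\le 3\,s!$ from $e_m\ge 1/3$ and $H_b\le 1+\log(n!/s!)\le 1+(n-s)\log n$ — only gives $f(n,s)\le 3\,s!(1+(n-s)\log n)$, falling short of the target by an unabsorbable additive $3\,s!$. My fix is to exploit that $g(x)=(\log x+1)/x$ is strictly decreasing on $[1,\infty)$. Since $b\ge a_s\ge 1$, this monotonicity gives $H_b\,n!/b\le(\log a_s+1)\,n!/a_s$, and after substituting $n!/a_s=s!/e_{n-s}$ and $\log a_s\le(n-s)\log n+\log e_{n-s}$ the desired bound $f(n,s)<3\,s!(n-s)\log n$ reduces to
\[
(3e_{n-s}-1)(n-s)\log n \;>\; 1+\log e_{n-s}.
\]

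The main obstacle is verifying this last inequality, which is tight precisely because $3e_m-1$ vanishes at $m:=n-s=3$. I handle it by cases. At $m=3$ the left side is zero while $1+\log e_3=1-\log 3<0$. At $m=2$ it reduces to $\log n>1-\log 2$, immediate from $n\ge 3$. For $m\ge 4$, one checks that $3e_m-1\ge 1/10$ (the minimum is attained at $m=5$, where $e_5=11/30$), while $1+\log e_m$ is either non-positive (if $m$ is odd, since then $e_m<1/e$) or else at most $1-\log(8/3)\approx 0.02$ (attained at $m=4$); since $(n-s)\log n\ge 4\log 3$ in the remaining range, the left side dominates by a wide margin. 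This completes the plan.
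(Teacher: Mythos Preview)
Your proof is correct, but the route differs from the paper's in both halves.

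For the lower bound you invoke the factorial--moment identity $\sum_{j\ge s}\binom{j}{s}a_j=n!/s!$ and the crude estimate $\binom{j}{s}\ge s+1$ for $j\ge s+1$ to obtain $b\le\frac{(s+2)\,n!}{2(s+1)\,s!}$, which is in fact marginally sharper than what the paper proves. The paper instead bounds each partial sum $e_i=\sum_{j=0}^i(-1)^j/j!\le 1/2$ directly and finishes with a geometric series, arriving at $b/n!<(s+1)/(2s\,s!)$.

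For the upper bound both arguments exploit that a function of the form $(\mathrm{const}+\log x)/x$ is decreasing, but the paper makes a cleverer choice of constant and evaluation point that avoids your case analysis entirely. It first weakens $H_b\le 1+\log b$ to $H_b<\log(3b)$ (using $\log 3>1$), and then applies the monotonicity of $\log(3x)/x$ at the cruder lower bound $b>n!/(3s!)$ rather than at $a_s$. This gives
\[
H_b\,\frac{n!}{b}<\frac{n!\log(3b)}{b}<3\,s!\log\!\Big(\frac{n!}{s!}\Big)<3\,s!(n-s)\log n
\]
in one line. Your sharper evaluation at $a_s$ is not wrong, but it is precisely what forces the delicate inequality $(3e_m-1)m\log n>1+\log e_m$ and the ensuing case split on $m=n-s$; the paper sidesteps all of that.
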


\begin{proof}For the upper bound we use that 
\begin{align}\label{e:blowbnd}
\frac{b}{n!}
&=\sum_{i=0}^{n-s}\frac{d_i}{i!(n-i)!}
>\frac{d_{n-s}}{(n-s)!s!}
=\frac1{s!}\sum_{j=0}^{n-s}\frac{(-1)^j}{j!}
\ge\frac1{3s!}
\end{align}
given that $n-s\ge2$. Also, $H_b<\log(3b)$ for integers $b\ge1$
and $\log(3b)/b$ is monotone decreasing for real $b\ge1$. 
So by \eref{e:blowbnd},
\begin{align*}
{n!H_b}/{b}&<{n!\log(3b)}/{b} < 3s!\log(n!/s!) < 3s!(n-s)\log n.
\end{align*}
For the lower bound we have
\begin{align*} 
\frac{b}{n!}
&=\frac{1}{n!}+\sum_{i=2}^{n-s}\frac1{(n-i)!}\sum_{j=0}^i\frac{(-1)^j}{j!}\\
&\le\frac{1}{n!}+\sum_{i=2}^{n-s}\frac1{2(n-i)!}\\
&<\frac12\sum_{i=s}^{n}\frac1{i!}\\
&\le\frac{1}{2s!}\sum_{i=0}^{n-s}\frac1{(s+1)^i}\\
&<\frac{s+1}{2s\,s!}.
\end{align*}
From which it follows that $n!/b>s!\,2s/(s+1)$.
\end{proof}

\begin{corollary}
$s!< f(n,s)< s!\,O(n \log n)$ for $n\iwb$ with $1\le s\le n-2$.
If $n-s$ is bounded then $s!< f(n,s)< s!\,O(\log n)$.
\end{corollary}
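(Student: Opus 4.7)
The plan is to extract this corollary directly from the previous one, which established the bounds $\frac{2s}{s+1}s! < f(n,s) < 3s!(n-s)\log n$ for $1\le s\le n-2$. No new ideas are required; the statement is essentially a repackaging of these bounds in asymptotic notation, with a small edge-case check.

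For the upper bounds, I would simply estimate $(n-s)\log n$. In the general regime $1\le s\le n-2$, the trivial bound $(n-s)\log n \le n\log n$ turns the previous corollary's right-hand inequality into $f(n,s) \le 3s!\cdot n\log n = s!\cdot O(n\log n)$ as $n\iwb$. When $n-s$ is bounded, say $n-s\le C$ for some constant $C$, the same right-hand inequality gives $f(n,s) \le 3Cs!\log n = s!\cdot O(\log n)$.

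For the lower bound $f(n,s) > s!$, I would observe that $\frac{2s}{s+1}\ge 1$ for $s\ge 1$, with equality only at $s=1$. If $s\ge 2$ then $\frac{2s}{s+1}s! > s!$ strictly, so the previous corollary's lower bound is already stronger than required. If $s=1$ the same inequality reads $f(n,1) > 1 = 1!$, which is precisely what is needed. Hence $f(n,s) > s!$ holds in every case, and indeed for every admissible $n$ rather than merely asymptotically.

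The main obstacle: there really is none. This corollary is a routine asymptotic consequence of its predecessor, and the only point requiring attention is the boundary case $s=1$ at which $\frac{2s}{s+1}=1$; this is handled directly by the strict inequality already present in the previous corollary.
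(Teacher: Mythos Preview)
Your proposal is correct and matches the paper's intent: the paper states this corollary without proof, treating it as an immediate consequence of the preceding inequality $\frac{2s}{s+1}s! < f(n,s) < 3s!(n-s)\log n$, which is exactly how you derive it. Your handling of the $s=1$ boundary case via the strict inequality is appropriate, and the upper-bound reductions are the obvious ones.
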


It was shown in \cite{CW05} that $f(n,s)< s!\,O(n \log n)$ 
provided $n\ge2s+2$, but we have shown that this extra assumption is
not required. For a probabilistic argument that shows that $f(n,s)\ge s!$,
see \cite[Prop.~3.1]{KK06}.

We next prove a result that will provide 
a lower bound on $f(n,s)$ for all $s\geq 3$. 


\begin{thrm}\label{mainthm2}
Let $G$ be the complete bipartite graph $K_{n,n}$, 
let $\alpha=\lfloor\sqrt{2s-2}/2+1\rfloor/2$
for an integer $s\ge3$
and let $\mm_S$ be a set of at most $\alpha n$ perfect matchings of
$G$. There is a perfect matching of $G$ which is $(s-1)$-rainbow and
$(2\alpha-1)$-light with respect to $\mm_S$.
\end{thrm}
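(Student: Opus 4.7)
The plan is to adapt the alternating-path argument used to prove \tref{t:newfn2} to the more general lightness and rainbow parameters. I would consider a $(2\alpha-1)$-light, $(s-1)$-rainbow matching $M$ of $G$, chosen in lexicographic order to maximise first the number of edges of $M$ carrying at most $2\alpha-2$ colours (call these \emph{slack}), and then the total size of $M$. Suppose for contradiction that $M$ is not perfect and let $v_1$, $w_1$ be unmatched. Define $A_1$ (resp.\ $A_2$) as the set of vertices reachable from $v_1$ (resp.\ $w_1$) by an even-length alternating path whose non-matching edges are slack. As in the $s=2$ proof, every vertex of $(A_1\cup A_2)\setminus\{v_1,w_1\}$ is matched by $M$; otherwise $M$ could be enlarged without reducing the count of slack edges, contradicting extremality.

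Next I would establish analogues of Claims~\ref{sizeA}--\ref{final}. The analogue of Claim~\ref{sizeA} is essentially unchanged: a slack edge at $v\in A_1$ leads, via $M$, to another element of $A_1$. For the analogue of Claim~\ref{sizeb}, the total colour-multiplicity at any vertex $u$ equals $|\mm_S|\le\alpha n$; since each non-slack edge at $u$ contributes at least $2\alpha-1$ colours while slack edges contribute at most $2\alpha-2$, we obtain a lower bound on the number of slack edges at $u$ in terms of the number of edges exceeding $(2\alpha-1)$-lightness. Claim~\ref{C} carries over verbatim. For the analogue of Claim~\ref{paths}, I would call a length-$3$ alternating path from $v\in A_1$ to $w\in A_2$ \emph{relevant} if its middle edge lies in $M$ with no endpoint in $A_1\cup A_2$, and both end edges are $(2\alpha-1)$-light and carry only colours whose multiplicity in $M$ is strictly less than $s-1$. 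The $(s-1)$-rainbow extremality of $M$ then controls how many colours of $M$ can be ``saturated''.

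The main difficulty is the analogue of Claim~\ref{final}. For $s>2$, two relevant length-$3$ paths sharing a middle edge do not immediately force a contradiction from a single colour coincidence, because a switching failure need only raise some colour's multiplicity from $s-1$ to $s$. I expect the right bookkeeping to show that if $k$ relevant paths share a middle edge then their $2k$ end edges must exhibit many coincidences of already-saturated colours, and a pigeonhole estimate then bounds $k$ by a function of $s$. Assembling the generalised claims yields an inequality roughly of the form $c(s)\,|A_1|\,|A_2|\leq n+1-|A_1|-|A_2|$, with $|A_i|$ lower bounded in terms of $\alpha$ and $n$. Solving this forces $(4\alpha-2)^2\lesssim 2(s-1)$, which is exactly the threshold $\alpha=\lfloor\sqrt{2s-2}/2+1\rfloor/2$ appearing in the statement.

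The principal obstacle is thus this generalisation of Claim~\ref{final}: once the correct pigeonhole on colour coincidences among relevant paths sharing a middle edge is in place, the remaining steps are a natural, if arithmetically heavier, extension of the $s=2$ bookkeeping. A secondary technical point is verifying that the switching operations along the extending alternating paths (from $v_1$ to $w_1$) preserve both the $(2\alpha-1)$-lightness and the $(s-1)$-rainbow condition simultaneously; this requires choosing the definition of ``slack'' and ``relevant'' so that every edge added to $M$ during a switch respects both constraints.
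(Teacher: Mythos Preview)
Your proposal is explicitly a plan, not a proof: you leave the generalisation of Claim~\ref{final} as an unproved ``expected'' pigeonhole, and the simultaneous preservation of lightness and $(s-1)$-rainbowness under switching as a ``secondary technical point''. Both are genuine gaps, and at least one bites. With your lexicographic extremality (slack count first, then size), an augmenting switch through a relevant length-$3$ segment $vxyw$ removes one matching edge $xy$ and inserts two non-matching edges $vx,yw$; if $xy$ happens to be slack while $vx,yw$ each carry exactly $2\alpha-1$ colours, the slack count drops and your ordering forbids the switch even though size increases. Separately, your analogue of Claim~\ref{sizeb} degenerates when $\alpha=1$ (which covers $3\le s\le 8$): the bound $(2\alpha-1)(n-\sigma(u))\le\alpha n$ gives only $\sigma(u)\ge0$, so $|A_i|\ge1$ is all you obtain and the final product inequality yields no contradiction. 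The $A_1,A_2$ machinery from the $s=2$ proof does not transplant cleanly.

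The paper avoids these issues by reorganising the argument rather than generalising it. First, Hall's theorem on the subgraph of $(2\alpha-1)$-light edges (which has minimum degree $\ge n/2$ since $|\mm_S|\le\alpha n$) gives a $(2\alpha-1)$-light \emph{perfect} matching outright. One then selects such a perfect $M$ minimising the number of edges whose colour appears $\ge s$ times in $M$, and subject to that maximising blank edges. Assuming $v_1w_1$ carries a bad colour, the paper takes a single maximal alternating path $P$ through $v_1w_1$ whose non-matching edges are blank, with endpoints $v_a,w_b$, and studies the edges at $v_a$ and $w_b$ directly. The relevant bookkeeping is over the colour sets $C_1$ (colours on exactly $s-2$ edges of $M\setminus\{v_1w_1\}$) and $C_2$ ($\ge s-1$ such edges), together with counts $A(\cdot),B(\cdot),D(\cdot)$ of safe, blank, and heavy edges at $v_a,w_b$. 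Six short claims give contradictory upper and lower bounds on $|B(v_a)\cup A(v_a)\cup B(w_b)\cup A(w_b)|$, with the threshold $(2\alpha-1)^2\le(s-1)\alpha$ emerging from the final chain. Because $M$ is perfect throughout, every switch is along an alternating \emph{cycle}, size never changes, and no analogue of Claim~\ref{final} or of the $A_1,A_2$ sets is needed.
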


We adopt a very similar approach and notation to that used in the
proof of \tref{t:newfn2}. 



\begin{claim}
There exists a $(2\alpha-1)$-light perfect matching of $G$.
\end{claim}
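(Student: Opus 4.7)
The plan is to split the edges of $G$ into two classes by colour count and then use only one class. Call an edge \emph{heavy} if it carries at least $2\alpha$ colours and \emph{light} if it carries at most $2\alpha-1$ colours. The goal is a perfect matching consisting entirely of light edges, which is precisely a $(2\alpha-1)$-light perfect matching.

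The first step is a single-vertex degree count. Fix any $u\in V(G)$ and let $c(e)$ denote the number of colours on the edge $e$. Because each matching in $\mm_S$ uses exactly one edge at $u$ and the colours are in bijection with the matchings, we have
\[
\sum_{e\ni u}c(e)=|\mm_S|\le \alpha n.
\]
Every heavy edge contributes at least $2\alpha$ to this sum, so the number of heavy edges incident to $u$ is at most $n/2$. Consequently each vertex is incident to at least $\lceil n/2\rceil$ light edges.

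The second step is to find a perfect matching in the spanning subgraph $G_L\subseteq G$ obtained by keeping only the light edges. Since $G_L$ has minimum degree at least $\lceil n/2\rceil$ on both sides, Hall's condition is immediate: for any $A\subseteq\V$ with $|A|\le\lceil n/2\rceil$, picking any $v\in A$ yields $|N_{G_L}(A)|\ge\deg_{G_L}(v)\ge|A|$; and for $|A|>\lceil n/2\rceil$, the set $\V\setminus A$ is too small to contain the entire light neighbourhood of any vertex of $\W$, so $N_{G_L}(A)=\W$. Any perfect matching in $G_L$ supplies the desired $(2\alpha-1)$-light perfect matching of $G$.

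There is no serious obstacle here: the argument is just an averaging bound on the edges carrying many colours at each vertex, followed by the standard bipartite Hall argument for graphs with minimum degree at least half the vertex class size. The only mild care required is integrality — since $\alpha$ may be a half-integer and $n$ may be odd, one should phrase the bound as $\lceil n/2\rceil$ light edges per vertex rather than $n/2$ — but this does not affect the Hall argument.
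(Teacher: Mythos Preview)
Your proof is correct and follows essentially the same approach as the paper: delete the edges with at least $2\alpha$ colours, use the bound $|\mm_S|\le\alpha n$ to show the remaining bipartite graph has minimum degree at least $n/2$, and apply Hall's theorem. The only difference is cosmetic---you phrase the degree bound as $\lceil n/2\rceil$ and spell out the averaging step $\sum_{e\ni u}c(e)\le\alpha n$ explicitly, whereas the paper just asserts $\delta(G')\ge n/2$.
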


\begin{proof}
Let $G'$ be the graph formed from $G$ by deleting all edges with at
least $2\alpha$ colours. Note that since $G$ is $n$-regular and
$|\mm_S|\leq \alpha n$, we have minimum degree $\delta(G')\geq n/2$.  Let
$A$ be an arbitrary subset of $\V$ and let $N$ denote the set of
neighbours of $A$ in $G'$. If $A=\emptyset$, then $|A|=|N|=0$. If
$1\leq A\leq n/2$, then $|N|\geq\delta(G')\geq n/2\geq |A|$.  So
suppose that $|A|>n/2$ and let $w$ be an arbitrary vertex of $\W$. Since
$|\V\setminus A|<n/2\le\delta(G')$, we see that $w$ must have a neighbour
in $A$. Hence, $|N|=|\W|=n\geq|A|$.  By Hall's marriage theorem, $G'$ has
a perfect matching, which is a $(2\alpha-1)$-light perfect matching
of $G$.
\end{proof}

Let $M$ be a $(2\alpha-1)$-light perfect matching of $G$ which
minimises the number of edges with a colour that appears on at least
$s$ edges, and subject to this condition, maximises the number of
blank edges. Aiming for a contradiction, suppose that some colour $c$
appears on at least $s$ edges of $M$. Without loss of generality,
$v_1w_1$ is an edge of $M$ that has colour $c$.

Let $P$ be a maximum length alternating path containing the edge
$v_1w_1$ such that the first edge and last edge of $P$ are in $M$ and
every edge in $E(P)\setminus M$ is blank. We can be sure that $P$ exists,
since the path consisting of the single edge $v_1w_1$ satisfies the
requirements. Now, $P$ has an odd number of edges, so we may assume that
it starts at a vertex $v_a\in\V$ and ends at a vertex $w_b\in\W$. 
Let $P_1$ be the subpath of $P$ from $v_a$ to $v_1$ and let $P_2$ be
the subpath of $P$ from $w_1$ to $w_b$. Note that $P_1$ has even
length. If $P_1$ has positive length then its first edge is in $M$ and its last 
edge is blank and cannot be $v_1w_1$. A similar argument applies to $P_2$, 
so $P_1$ and $P_2$ are disjoint.


Let $C_1$ be the set of colours assigned to exactly $s-2$ edges in
$M\setminus \{v_1w_1\}$, and let $C_2$ be the set of colours assigned
to at least $s-1$ edges in $M\setminus \{v_1w_1\}$. Let $G^*$ be the graph
obtained from $G$ by removing the edges in $M$.
Let $A(v_a)$ be
the set of edges incident to $v_a$ in $G^*$ which are assigned between
1 and $2\alpha-1$ colours and such that none of these colours is in
$C_1\cup C_2$. Let $A(w_b)$ be the set of edges incident to $w_b$ in
$G^*$ which are assigned between 1 and $2\alpha-1$ colours and such
that none of these colours is in $C_2$. For $u\in \{v_a,w_b\}$, let
$B(u)$ be the set of blank edges incident to $u$ in $G^*$.  Let
$\psi=|B(v_a)\cup A(v_a)\cup B(w_b)\cup A(w_b)|$.

Let $e_1$ be the edge in $P$ incident to $v_a$ and let $e_2$ be the
edge in $P$ incident to $w_b$. Let $\kappa_1=0$ if $e_1$ is assigned
at least one colour in $C_1\cup C_2$, and otherwise let
$\kappa_1=1$. Let $\kappa_2=0$ if $e_2$ is assigned at least one
colour in $C_2$, and otherwise let $\kappa_2=1$. Let
$\kappa=\kappa_1+\kappa_2$.


\begin{claim}\label{UpperBound}
$B(v_a)$, $A(v_a)$, $B(w_b)$ and $A(w_b)$ are disjoint sets and
$\psi\leq n-1-\kappa$.
\end{claim}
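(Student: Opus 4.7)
The plan is to handle the two parts of the claim separately via switching arguments on alternating substructures built from $P$. For disjointness, the pairs $(B(v_a), A(v_a))$ and $(B(w_b), A(w_b))$ are immediately disjoint by the blank-versus-coloured dichotomy, and the mixed pairs that combine a ``blank'' set with a ``coloured'' set at opposite vertices could only share $v_a w_b$, which cannot satisfy both conditions. Only $B(v_a) \cap B(w_b)$ and $A(v_a) \cap A(w_b)$ are nontrivial, and in each the shared edge would have to be $v_a w_b \in G^*$. I would handle both by switching on the alternating cycle formed by $P$ together with $v_a w_b$: the resulting matching $M'$ is $(2\alpha-1)$-light, and the colour constraints on $v_a w_b$ (either blank, or colours outside $C_1 \cup C_2$) ensure that any colour of $v_a w_b$ whose $M$-count could rise to $s$ in $M'$ must already colour $v_1 w_1$; since $v_1 w_1$ is removed by the switch, no new heavy colour appears while $v_1 w_1$ itself is lost, contradicting the minimality of $M$. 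Essentially the same argument applied to $v_a w_b$ alone shows that $v_a w_b \notin B(v_a) \cup A(v_a) \cup B(w_b) \cup A(w_b)$ whenever $P$ has length at least three.

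For the size bound, I would use the natural bijection $\phi$ between the $n$ edges at $v_a$ and the $n$ edges at $w_b$ defined by $\phi(v_a w) = M(w) w_b$. The central subclaim is: if $v_a w \in B(v_a) \cup A(v_a)$ and $\phi(v_a w) \in B(w_b) \cup A(w_b)$, then switching on an alternating cycle built from $P$ together with $v_a w$ and $\phi(v_a w)$ contradicts the minimality of $M$. The cycle splits on whether $w \in V(P)$ and on the position of $v_1 w_1$ within $P$: for $w \in V(P)$, one of two natural subcycles of $P \cup \{v_a w, \phi(v_a w)\}$ (each using only one of the new edges) suffices, depending on which side of $w$ contains $v_1 w_1$; for $w \notin V(P)$, a longer alternating cycle that reroutes through an additional edge such as $y_1 y_k$ is needed. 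In each case the colour constraints on $v_a w$ and $\phi(v_a w)$ are exactly tight enough to guarantee that no new heavy colour is introduced by the switch.

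Granted the subclaim, $\phi$ injects $B(v_a) \cup A(v_a)$ into the complement of $B(w_b) \cup A(w_b)$ within the edges at $w_b$. Since $v_a w_b \notin B(v_a) \cup A(v_a)$, the image avoids $\phi(v_a w_b) = e_2$, and since $e_1 \notin B(v_a) \cup A(v_a)$, the image also avoids $\phi(e_1) = v_a w_b$; together with the fact that $e_2$ and $v_a w_b$ both lie outside $B(w_b) \cup A(w_b)$, this delivers $\psi \leq n - 1$ unconditionally, covering $\kappa = 0$. For the $\kappa$ refinement, when $\kappa_1 = 1$ the edge $e_1$ has colours outside $C_1 \cup C_2$ and behaves like an $A(v_a)$-style edge in the subclaim's switching argument, effectively enlarging the $\phi$-domain by $\{e_1\}$; symmetrically when $\kappa_2 = 1$, the image side enlarges by $\{e_2\}$. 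A case analysis over the four combinations of $(\kappa_1, \kappa_2)$, tracking which of the ``always-excluded'' edges $e_2$ and $v_a w_b$ are absorbed by each extension, yields $\psi \leq n - 1 - \kappa$.

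The hard part is executing the subclaim's switching argument uniformly across all of its cases: one must pick the correct alternating substructure (depending on where $w$ and $v_1 w_1$ lie) so that the switched matching remains $(2\alpha-1)$-light while strictly reducing the number of edges of $M$ bearing a colour that appears on at least $s$ edges of $M$.
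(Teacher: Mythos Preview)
Your overall framework is the paper's argument rephrased: the bijection $\phi$ encodes exactly the collision the paper obtains by pigeonhole, and your subclaim in the case $w\notin V(P)$ is precisely the paper's cycle $\Gamma=P\cup\{v_aw,\,wM(w),\,M(w)w_b\}$ (the ``additional edge'' you need is simply the matching edge $wM(w)$; there is no need for any mysterious reroute through an undefined $y_1y_k$).

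The genuine gap is in your $\kappa$-refinement when $\kappa_1=\kappa_2=1$. Enlarging the domain by $e_1$ contributes $\phi(e_1)=v_aw_b$ to the image, and enlarging the image side by $e_2$ contributes $e_2$ itself; so both of your ``always-excluded'' edges $v_aw_b$ and $e_2$ are absorbed, and disjointness of the enlarged sets yields only $\big(|B(v_a)\cup A(v_a)|+1\big)+\big(|B(w_b)\cup A(w_b)|+1\big)\le n$, i.e.\ $\psi\le n-2$, one short of the required $n-3$. What is missing is a \emph{third} excluded edge, namely $v_1w_b$ on the $w_b$-side (equivalently $v_aw_1$ on the $v_a$-side), corresponding to the matching edge $v_1w_1$. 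The exclusions $v_aw_1\notin B(v_a)\cup A(v_a)$ and $v_1w_b\notin B(w_b)\cup A(w_b)$ do follow from the one-sided switching already present in the $w\in V(P)$ case of your subclaim (take $w=w_1$, and symmetrically $v=v_1$), but you never extract them or feed them into the count. The paper makes this step explicit by first proving the one-sided statement that no edge of $B(v_a)\cup A(v_a)$ is incident with $V(P_2)$ (and symmetrically for $V(P_1)$), and then doing the pigeonhole over $M\setminus\{v_1w_1,e_1,e_2\}$, a set of size at most $n-1-\kappa$; translated into your $\phi$-language this amounts to excluding the three $w_b$-edges $v_aw_b$, $e_2$, and $v_1w_b$, among which there are always at least $1+\kappa$ distinct elements.
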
 

\begin{proof}
It is immediate from their definitions that $B(v_a)$, $A(v_a)$,
$B(w_b)$ and $A(w_b)$ are disjoint sets unless
$v_aw_b\in\big(B(v_a)\cup A(v_a)\big)\cap\big(B(w_b)\cup A(w_b)\big)$.


Suppose that there is an edge $v_au\in B(v_a)\cup A(v_a)$ such that
$u\in V(P)$ and the subpath $P'$ of $P$ from $v_a$ to $u$
contains the edge $v_1w_1$. Note that the only edge incident to $v_a$
in $P$ is in $M$, so $v_au$ is not in $P'$ and adding the edge $v_au$
to $P'$ creates an alternating cycle. Every edge in this alternating
cycle is assigned at most $2\alpha-1$ colours, and switching on this
cycle reduces the number of edges in $M$ which have a colour that
appears on at least $s$ edges of the matching, contradicting our
choice of $M$.

We note two consequences. Firstly,
$v_aw_b\notin \big(B(v_a)\cup A(v_a)\big)\cap\big(B(w_b)\cup A(w_b)\big)$, so
$B(v_a)$, $A(v_a)$, $B(w_b)$ and $A(w_b)$ are disjoint sets. Secondly, no edge in
$B(v_a)\cup A(v_a)$ is incident to a vertex in $P_2$. A similar argument shows
that no edge in $B(w_b)\cup A(w_b)$ is incident to a vertex in
$P_1$. However, every edge that is not in $M$ is incident with two edges in $M$,
because $M$ is a perfect matching.
It follows that every edge in $B(v_a)\cup A(v_a)\cup B(w_b)\cup
A(w_b)$ is incident to some edge of $M\setminus \{v_1w_1\}$ that is
neither the first nor the last edge of $P$. Note that $\kappa\in
\{0,1,2\}$, and if $v_1w_1$ is the only edge of $P$, then
$\kappa=0$. Furthermore, if $\kappa=2$, the first and last edges of
$P$ are distinct edges in $M\setminus \{v_1w_1\}$.



So if we suppose for contradiction that $\psi>n-1-\kappa$
then there is some edge $v_xw_y\in M$ that is neither the first edge nor the last edge of $P$
and for which $v_xw_b\in B(w_b)\cup A(w_b)$ 
and $v_aw_y\in B(v_a)\cup A(v_a)$. Therefore, 
$v_x\notin V(P_1)$ and $w_y\notin V(P_2)$, so $v_xw_y$ is not an edge of $P$.
Let $\Gamma$ be the cycle formed by combining $P$ and the path
$w_bv_xw_yv_a$.
Then
$\Gamma$ is an alternating cycle which contains $v_1w_1$ and no edges
that have more than $2\alpha-1$ colours. There are at most two
non-blank edges in $E(\Gamma)\setminus M$, and no colour that appears
on $E(\Gamma)\setminus M$ appears more than $s-1$ times in
$(E(\Gamma)\cup M)\setminus \{v_1w_1\}$. Hence, the matching obtained
by switching on $\Gamma$ contradicts our choice of $M$.
\end{proof}


\begin{claim}\label{UsedColours}
$(s-2)|C_1|+(s-1)|C_2|\leq (2\alpha-1)(n-1-|B(v_a)|-|B(w_b)|-\kappa_1)$.
\end{claim}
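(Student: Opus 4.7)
The plan is to convert the inequality into a statement about the number of edges of $M\setminus\{v_1w_1\}$ carrying no colour from $C_1\cup C_2$. By the definitions of $C_1$ and $C_2$ the left-hand side satisfies
\[
(s-2)|C_1|+(s-1)|C_2|\;\le\;\sum_{e\in M\setminus\{v_1w_1\}}\bigl|\mathrm{colours}(e)\cap(C_1\cup C_2)\bigr|,
\]
and since $M$ is $(2\alpha-1)$-light each summand is at most $2\alpha-1$. Hence it suffices to exhibit at least $|B(v_a)|+|B(w_b)|+\kappa_1$ edges of $M\setminus\{v_1w_1\}$ whose colour sets are disjoint from $C_1\cup C_2$; in fact I will arrange for these edges to be blank.

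For each blank edge $v_aw_y\in B(v_a)$, the maximality of $P$ forces $w_y\in V(P)$, for otherwise the $M$-edge at $w_y$ together with $v_aw_y$ could be prepended to $P$ to extend it while preserving the required form. Writing $w_y=p_{2i-1}$ in the vertex sequence of $P$, the $M$-edge at $w_y$ is $\phi(v_aw_y):=p_{2i-2}p_{2i-1}\in E(P)\cap M$, and together with $v_aw_y$ it closes an alternating cycle $\Gamma_v$ of length $2i$ whose non-matching edges are all blank. Switching on $\Gamma_v$ produces a $(2\alpha-1)$-light perfect matching $M'$ in which every colour count is at most that in $M$, so $M'$ has no more matching edges whose colour appears on at least $s$ edges of the matching than $M$ does. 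If any matching edge of $\Gamma_v$ were non-blank, $M'$ would have strictly more blank edges than $M$, contradicting the choice of $M$. Therefore every matching edge of $\Gamma_v$ is blank; in particular $\phi(v_aw_y)$ is blank. The symmetric argument at $w_b$ provides, for each blank $v_xw_b\in B(w_b)$, a blank matching edge $\psi(v_xw_b)\in E(P)\cap M$. Finally, when $\kappa_1=1$ the edge $e_1$ itself has no colour in $C_1\cup C_2$, and it lies in $M\setminus\{v_1w_1\}$ since $\kappa_1=1$ forces $|P|>1$.

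It remains to verify that the collected edges are distinct. Different blanks at $v_a$ produce different $\phi$-images because $M$ is a matching, and likewise for $\psi$ at $w_b$. A cross-overlap $\phi(v_aw_y)=\psi(v_xw_b)$ would force $v_x=p_{2i-2}$; combining $\Gamma_v$ with its $w_b$-counterpart then shows every matching edge of $P$ is blank, but $v_1w_1\in E(P)$ carries a colour appearing on at least $s$ edges of $M$ and so cannot be blank---a contradiction. A similar argument rules out any overlap between $e_1$ (in the $\kappa_1=1$ case) and the images of $\phi$ or $\psi$. The hard step is the cycle-switching argument at $\Gamma_v$: it depends crucially on the lexicographic choice of $M$ (fewest heavy edges, then most blank edges), and one must check that switching on $\Gamma_v$ preserves $(2\alpha-1)$-lightness and does not increase the heavy-edge count, so that any gain in blank edges yields the contradiction.
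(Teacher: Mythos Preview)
Your argument is correct and follows the same route as the paper's: both use the maximality of $P$ to force the $M$-neighbours of the endpoints of edges in $B(v_a)$ and $B(w_b)$ onto $P$, and then invoke the extremal choice of $M$ (via the switching on the resulting alternating cycle) to conclude that the corresponding matching edges are blank. The only cosmetic difference is in the disjointness step: the paper observes directly that the blank matching edges coming from $B(v_a)$ lie in $P_1$ and those from $B(w_b)$ lie in $P_2$ (which are disjoint), whereas you argue by contradiction that an overlap would make every matching edge of $P$---including $v_1w_1$---blank.
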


\begin{proof} 
Let $w'$ be a vertex in $\W$ such that $v_aw'\in B(v_a)$, and let $v'$
be the vertex matched to $w'$ by $M$. If $w'\notin V(P)$, then $v'\notin V(P)$ 
so combining $v'w'v_a$ with $P$ creates a longer alternating
path containing $v_1w_1$ such that the first and last edges are in $M$
and the edges not in $M$ are blank, contradicting our choice of
$P$. Hence, $w'\in V(P)$. Let $P'$ be the subpath of $P$ from $v_a$ to
$w'$ and let $\Gamma$ be the cycle obtained from $P'$ by adding
$w'v_a$. Note that switching on $\Gamma$ does not add any coloured
edge to the matching. Hence, by our choice of $M$, every edge of
$\Gamma$ is blank. In particular, $v'w'$ is blank, and since $v_1w_1$
is not blank, $w'\in V(P_1)$. So, every edge in $B(v_a)$ connects
$v_a$ to a blank edge in $E(P_1)\cap M$. By symmetry, every edge in
$B(w_b)$ connects $w_b$ to a blank edge in $E(P_2)\cap M$. Hence, there
are at most $n-1-|B(v_a)|-|B(w_b)|-\kappa_1$ edges in
$M\setminus \{v_1w_1\}$ with a colour in $C_1\cup C_2$.
Every colour in $C_1$ appears on $s-2$ of
these edges and every colour in $C_2$ appears on at least $s-1$ of
these edges. Since every edge in $M$ is assigned at most $2\alpha-1$
colours, the claim follows.
\end{proof}

Let $D(v_a)$ be the set of edges incident to $v_a$ that are assigned
at least $2\alpha$ colours not in $C_1\cup C_2$, and let $D(w_b)$ be
the set of edges incident to $w_b$ that are assigned at least
$2\alpha$ colours not in $C_2$.

\begin{claim}\label{LowerBoundA}
$|B(v_a)\cup A(v_a)|\geq n-|D(v_a)|-|C_1|-|C_2|-\kappa_1$.
\end{claim}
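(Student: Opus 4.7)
The plan is to prove the inequality by directly counting the $n$ edges of $G$ incident to $v_a$ and bounding the number of them that fall outside $B(v_a)\cup A(v_a)$.

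First I would classify the ``bad'' edges (those not in $B(v_a)\cup A(v_a)$) into three possibly overlapping types: (i) the unique edge $e_1\in M$ incident to $v_a$; (ii) edges of $G^*$ incident to $v_a$ that carry at least one colour of $C_1\cup C_2$; and (iii) edges of $G^*$ incident to $v_a$ that are assigned at least $2\alpha$ colours. Directly from the definitions of $B(v_a)$ and $A(v_a)$, every bad edge belongs to at least one of these types. Moreover, any type (iii) edge whose colour set avoids $C_1\cup C_2$ lies in $D(v_a)$, and otherwise it is already in type (ii); hence every bad edge is either $e_1$, or lies in $D(v_a)$, or carries some colour of $C_1\cup C_2$.

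Second I would bound each contribution. Since every colour of $\mm_S$ corresponds to a perfect matching of $G$, each colour of $C_1\cup C_2$ appears on exactly one edge incident to $v_a$, so at most $|C_1|+|C_2|$ edges at $v_a$ carry such a colour in total. The $(2\alpha-1)$-lightness of $M$ guarantees that $e_1$ carries at most $2\alpha-1$ colours, so $e_1\notin D(v_a)$; and $e_1\notin G^*$ rules out $e_1$ from type (ii) as defined. By the definition of $\kappa_1$, the edge $e_1$ is itself one of the $|C_1|+|C_2|$ colour-$(C_1\cup C_2)$ edges at $v_a$ precisely when $\kappa_1=0$, so the number of $G^*$-edges at $v_a$ with a colour in $C_1\cup C_2$ is at most $|C_1|+|C_2|-(1-\kappa_1)$. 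Combining the three contributions,
\[
n-|B(v_a)\cup A(v_a)|\;\leq\;1+|D(v_a)|+\bigl(|C_1|+|C_2|+\kappa_1-1\bigr)\;=\;|D(v_a)|+|C_1|+|C_2|+\kappa_1,
\]
which rearranges to the claimed inequality.

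The only real delicacy here is tracking the overlaps among the three types carefully enough that the $\kappa_1$ term comes out with the correct sign rather than being lost or doubled. The two observations that make the bookkeeping clean are that $e_1\notin G^*$, which separates type (i) from types (ii) and (iii) as defined, and that $e_1\notin D(v_a)$ by the $(2\alpha-1)$-lightness of $M$; once these are in hand the argument is pure counting.
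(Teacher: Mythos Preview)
Your argument is correct and is essentially the same as the paper's proof: both count the $n$ edges at $v_a$ and bound the ``bad'' ones by splitting into those carrying a colour of $C_1\cup C_2$, those in $D(v_a)$, and the single $M$-edge $e_1$, using the definition of $\kappa_1$ to avoid double-counting $e_1$. The only cosmetic difference is that the paper folds $e_1$ into the $|C_1|+|C_2|$ count when $\kappa_1=0$ and adds $\kappa_1$ separately, whereas you always separate $e_1$ out and subtract $1-\kappa_1$ from the $|C_1|+|C_2|$ term; the arithmetic is identical.
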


\begin{proof}
There are $n$ edges incident to $v_a$ in $G$. An edge incident to
$v_a$ is in $B(v_a)\cup A(v_a)$ unless it has a colour in $C_1\cup C_2$, 
it has at least $2\alpha$ colours, or it is in $M$. There are
exactly $|C_1|+|C_2|$ edges incident to $v_a$ with a colour in
$C_1\cup C_2$, since each colour corresponds to a perfect matching and
$C_1$ and $C_2$ are disjoint. The number of edges incident to $v_a$
with at least $2\alpha$ colours that are not in $C_1\cup C_2$ is 
$|D(v_a)|$. The number of edges incident to $v_a$ that are in $M$
and have no colour in $C_1\cup C_2$ is $\kappa_1$.
\end{proof}

\begin{claim}\label{LowerBoundB}
$|B(w_b)\cup A(w_b)|\geq n-|D(w_b)|-|C_2|-\kappa_2$.
\end{claim}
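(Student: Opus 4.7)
My plan is to mirror the proof of Claim \ref{LowerBoundA} essentially verbatim, with $w_b$ playing the role of $v_a$, $C_2$ playing the role of $C_1\cup C_2$, $D(w_b)$ playing the role of $D(v_a)$, and $\kappa_2$ playing the role of $\kappa_1$. The asymmetry between the two claims reflects only the asymmetric definitions of $A(v_a)$ and $A(w_b)$: edges in $A(w_b)$ are required to avoid colours in $C_2$ but not those in $C_1$, so the absence of a $|C_1|$ term on the right-hand side is exactly what one should expect.

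I would begin by observing that $w_b$ is incident with exactly $n$ edges of $G$ and then classify these edges. An edge incident to $w_b$ fails to lie in $B(w_b)\cup A(w_b)$ precisely when at least one of the following holds: it carries a colour in $C_2$; it carries at least $2\alpha$ colours, none of which lies in $C_2$ (so it belongs to $D(w_b)$); or it lies in $M$. I would then bound each of these classes separately. Each colour in $C_2$ corresponds to a perfect matching, so contributes exactly one edge through $w_b$, giving at most $|C_2|$ edges in the first class. The second class is contained in $D(w_b)$ by definition, hence is of size at most $|D(w_b)|$. The unique edge of $M$ incident to $w_b$ is $e_2$; by the definition of $\kappa_2$, this edge is already in the first class when $\kappa_2=0$, and contributes exactly one previously-uncounted edge when $\kappa_2=1$. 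Summing yields $|B(w_b)\cup A(w_b)|\ge n-|C_2|-|D(w_b)|-\kappa_2$, as claimed.

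I do not expect any real obstacle here: the proof is a routine symmetric transcription of Claim \ref{LowerBoundA}. The only point that needs mild care is to avoid double-counting an edge with at least $2\alpha$ colours that also has a colour in $C_2$; placing such an edge into the ``carries a colour in $C_2$'' class (rather than trying to bound it via $D(w_b)$) keeps the count clean, and the ``none of which lies in $C_2$'' qualifier in the definition of $D(w_b)$ was built in precisely to make this split work.
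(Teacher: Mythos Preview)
Your proposal is correct and follows essentially the same argument as the paper: both start from the $n$ edges at $w_b$, observe that an edge fails to lie in $B(w_b)\cup A(w_b)$ only if it has a colour in $C_2$, has at least $2\alpha$ colours, or lies in $M$, and then bound these three contributions by $|C_2|$, $|D(w_b)|$, and $\kappa_2$ respectively. Your explicit remark about avoiding double-counting (placing an edge with $\geq 2\alpha$ colours including one in $C_2$ into the $C_2$ class) makes transparent exactly the point the paper handles implicitly by restricting to ``at least $2\alpha$ colours that are not in $C_2$'' when invoking $D(w_b)$.
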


\begin{proof}
There are $n$ edges incident to $w_b$ in $G$. An edge incident to
$w_b$ is in $B(w_b)\cup A(w_b)$ unless it has a colour in $C_2$, it
has at least $2\alpha$ colours, or it is in $M$. There at exactly
$|C_2|$ edges incident to $w_b$ with a colour in $C_2$, since each
colour corresponds to a perfect matching. The number of edges incident
to $w_b$ with at least $2\alpha$ colours that are not in $C_2$ is
$|D(w_b)|$. The number of edges incident to $w_b$ that are in $M$ and
have no colour in $C_2$ is $\kappa_2$.
\end{proof}

\begin{claim}\label{MulticolourBoundA}
$|D(v_a)|\leq \frac{1}{2\alpha}(\alpha n-|C_1|-|C_2|-|A(v_a)|)$.
\end{claim}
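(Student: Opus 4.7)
The plan is to establish the bound via a direct double-counting argument on edge-colour incidences at the vertex $v_a$. Since each colour in $\mm_S$ corresponds to a perfect matching, every colour contributes exactly one edge incident to $v_a$, and therefore the total number of (edge, colour) incidences at $v_a$ is exactly $|\mm_S|\le\alpha n$. I would then partition these incidences according to which colour class they come from.

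First I would account for the contributions from $C_1$ and from $C_2$: since these colour classes are disjoint and each colour in $C_1\cup C_2$ contributes exactly one edge-incidence at $v_a$, together they contribute exactly $|C_1|+|C_2|$ incidences. Next I would handle the edges in $A(v_a)\cup D(v_a)$. By their definitions, every colour appearing on an edge in $A(v_a)\cup D(v_a)$ lies outside $C_1\cup C_2$, so these incidences are disjoint from the previous ones. Moreover, $A(v_a)$ and $D(v_a)$ are themselves disjoint: an edge in $A(v_a)$ carries at most $2\alpha-1$ colours in total, whereas an edge in $D(v_a)$ carries at least $2\alpha$ colours (none in $C_1\cup C_2$), so the same edge cannot lie in both sets.

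Each edge in $A(v_a)$ contributes at least one colour outside $C_1\cup C_2$ by definition, while each edge in $D(v_a)$ contributes at least $2\alpha$ such colours. Summing the four disjoint contributions gives
\[
|C_1|+|C_2|+|A(v_a)|+2\alpha|D(v_a)|\;\le\;\alpha n,
\]
and rearranging yields $|D(v_a)|\le\tfrac{1}{2\alpha}(\alpha n-|C_1|-|C_2|-|A(v_a)|)$ as required.

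There is no serious obstacle here: the only thing one really has to check carefully is the disjointness of the four contributions at $v_a$, which is immediate from the definitions and from the fact that $M$ being $(2\alpha-1)$-light excludes $M$-edges from $D(v_a)$. The companion bound for $|D(w_b)|$ will follow by an identical argument, replacing $C_1\cup C_2$ by $C_2$ alone.
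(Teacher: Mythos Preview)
Your argument is correct and is essentially the same counting argument as the paper's: both exploit that each colour hits $v_a$ in exactly one edge, so the at most $\alpha n$ colours split among $C_1\cup C_2$, the (disjoint) colours on $A(v_a)$-edges, and at least $2\alpha$ further colours per $D(v_a)$-edge. One small wording issue: it is not true that \emph{every} colour on a $D(v_a)$-edge lies outside $C_1\cup C_2$ (such an edge might also carry colours from $C_1\cup C_2$), but your inequality only uses the guaranteed $2\alpha$ colours outside $C_1\cup C_2$, so the conclusion is unaffected.
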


\begin{proof}
The total number of colours not in $C_1\cup C_2$ and not used by the edges in $A(v_a)$ is at most $\alpha n-|C_1|-|C_2|-|A(v_a)|$. Each colour appears on at most one edge incident to $v_a$. Therefore the result follows from the definition of $D(v_a)$.
\end{proof}

\begin{claim}\label{MulticolourBoundB}
$|D(w_b)|\leq \frac{1}{2\alpha}(\alpha n-|C_2|-|A(w_b)|)$.
\end{claim}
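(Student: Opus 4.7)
The plan is to mimic exactly the proof of Claim \ref{MulticolourBoundA}, replacing the role of $v_a$ by $w_b$ and the forbidden colour class $C_1\cup C_2$ by $C_2$. The key observation is that each colour in $\mm_S$ corresponds to a perfect matching of $G$, so each colour is used on exactly one edge incident to $w_b$. Hence to bound $|D(w_b)|$ we will count, in two different ways, the colours used by edges incident to $w_b$ that are \emph{not} in $C_2$.

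First I would observe that the total number of colours is $|\mm_S|\le \alpha n$, so the number of colours not in $C_2$ is at most $\alpha n-|C_2|$. Next, each edge in $A(w_b)$ is, by definition, assigned at least one colour not in $C_2$, and since each such colour can appear on only one edge incident to $w_b$, the edges of $A(w_b)$ collectively consume at least $|A(w_b)|$ distinct such colours. Consequently, the number of colours not in $C_2$ available to be used on edges in $D(w_b)$ is at most $\alpha n-|C_2|-|A(w_b)|$.

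Finally, by definition of $D(w_b)$, each of its edges is assigned at least $2\alpha$ colours not in $C_2$, and again these colours must all be distinct across edges of $D(w_b)$ because each colour forms a perfect matching. Therefore $2\alpha\,|D(w_b)|\le \alpha n-|C_2|-|A(w_b)|$, which rearranges to the desired inequality. There is no real obstacle here: the proof is purely a bookkeeping argument using the perfect matching property of each colour class, and it is strictly simpler than the proof of Claim \ref{MulticolourBoundA} because we only need to exclude the colour class $C_2$ rather than $C_1\cup C_2$.
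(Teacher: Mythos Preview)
Your argument is correct and is essentially the same as the paper's: both count the colours outside $C_2$ not appearing on edges of $A(w_b)$, using that each colour hits exactly one edge at $w_b$, and then divide by $2\alpha$. You have simply spelled out the intermediate steps in more detail than the paper does.
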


\begin{proof}
The total number of colours not in $C_2$ and not used by the edges in $A(w_b)$ is at most $\alpha n-|C_2|-|A(w_b)|$. Each colour appears on exactly one edge incident to $w_b$. Therefore the result follows from the definition of $D(w_b)$.
\end{proof}

\begin{proof}[Proof of \tref{mainthm2}]
Since $\alpha=\lfloor \sqrt{2s-2}/2+1\rfloor/2$ and $s\geq 3$, we have $1\ge 1/(2\alpha)\ge(2\alpha-1)^2/(s\alpha-\alpha)$ with at least one of the inequalities being strict. By \cref{LowerBoundA,LowerBoundB,UsedColours,MulticolourBoundA,MulticolourBoundB},
\begin{align*}
\psi&\geq 2n-|D(v_a)|-|D(w_b)|-|C_1|-2|C_2|-\kappa\\
&\geq 2n-\frac{1}{ 2\alpha}(2\alpha n-|C_1|-2|C_2|-|A(v_a)|-|A(w_b)|)-|C_1|-2|C_2|-\kappa\\
&= n+\frac{1}{2\alpha}(|A(v_a)|+|A(w_b)|)-\frac{(2\alpha-1)}{2\alpha}(|C_1|+2|C_2|)-\kappa\\
&\geq n+\frac{1}{2\alpha}(|A(v_a)|+|A(w_b)|)-\frac{(2\alpha-1)}{(s-1)\alpha}\big((s-2)|C_1|+(s-1)|C_2|\big)-\kappa\\
&\geq n+\frac{1}{ 2\alpha}(|A(v_a)|+|A(w_b)|)-\frac{(2\alpha-1)^2}{(s-1) \alpha}(n-1-|B(v_a)|-|B(w_b)|-\kappa_1)-\kappa\\
&\geq \left(1-\frac{( 2\alpha-1)^2}{(s-1) \alpha}\right)n+\frac{( 2\alpha-1)^2}{(s-1) \alpha}(\psi+1+\kappa_1)-\kappa.
\end{align*}
Hence
\begin{align*}
\psi&\geq n+\frac{( 2\alpha-1)^2}{(s-1) \alpha-( 2\alpha-1)^2}(1+\kappa_1)-\frac{(s-1) \alpha\kappa}{(s-1) \alpha-( 2\alpha-1)^2}\geq n-\kappa,
\end{align*}
given that $1+\kappa_1\ge\kappa$. 
Claim~\ref{UpperBound} now provides a contradiction
to our assumption that some colour appears on at least $s$ edges of $M$,
completing the proof of the theorem.
\end{proof}

\begin{corollary}\label{cy:linlowbnd}
If $s\geq 3$, then 
\[f(n,s)>\left\lfloor \frac{2+\sqrt{2s-2}}{2}\right\rfloor \frac{n}{2}.\]
\end{corollary}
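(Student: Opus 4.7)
The plan is to deduce the corollary directly from \tref{mainthm2}. Let $\alpha=\lfloor\sqrt{2s-2}/2+1\rfloor/2$ as in that theorem, fix an arbitrary $S\subseteq\sym_n$ with $|S|\le\alpha n$, and build the associated collection $\mm_S$ of perfect matchings of $G=K_{n,n}$ using the colouring scheme introduced before \tref{t:newfn2}: each permutation $\sigma\in S$ contributes the matching $\{v_iw_{\sigma(i)}\colon i\in[n]\}$, all of whose edges receive a colour unique to $\sigma$.

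Since $|\mm_S|\le\alpha n$, \tref{mainthm2} furnishes a perfect matching $M$ of $G$ which is $(s-1)$-rainbow with respect to $\mm_S$ (the $(2\alpha-1)$-light conclusion is not needed here). Writing $M=\{v_iw_{p(i)}\colon i\in[n]\}$ for some $p\in\sym_n$, I would then translate the rainbow property into a Hamming-distance bound: for any $\sigma\in S$, the edges of $M$ carrying $\sigma$'s colour are precisely those $v_iw_{p(i)}$ with $p(i)=\sigma(i)$, so the number of positions at which $p$ agrees with $\sigma$ is at most $s-1$. Consequently $d_H(p,\sigma)\ge n-(s-1)>n-s$ for every $\sigma\in S$, which shows that $S$ does not cover $p$ within radius $n-s$. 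Since $S$ was arbitrary of size at most $\alpha n$, this forces $f(n,s)>\alpha n$.

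It remains to verify that $\alpha n$ is exactly the quantity appearing in the corollary. This is a one-line check: $\lfloor\sqrt{2s-2}/2+1\rfloor=\lfloor(2+\sqrt{2s-2})/2\rfloor$, and hence
\[
\alpha n=\frac{1}{2}\left\lfloor\frac{2+\sqrt{2s-2}}{2}\right\rfloor n=\left\lfloor\frac{2+\sqrt{2s-2}}{2}\right\rfloor\frac{n}{2}.
\]
There is no substantive obstacle in this argument, since the heavy lifting has already been done inside \tref{mainthm2}; the corollary is essentially a dictionary translation between rainbow matchings in $K_{n,n}$ and covering codes in $(\sym_n,d_H)$, together with a cosmetic rewriting of the constant $\alpha$.
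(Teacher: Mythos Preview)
Your proposal is correct and is exactly the intended argument: the paper states the corollary without proof, expecting the reader to do precisely the translation you describe, mirroring the derivation of \cyref{cy:3n/4} from \tref{t:newfn2}. There is nothing to add or fix.
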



In light of \cjref{cj:KS}, it is worth remarking that
\cyref{cy:linlowbnd} shows that $f(n,3)>n$.  It is immediate from the
definition that $f(n,3)\ge f(n,2)$, but we now know that this
inequality is strict for even $n$. We also know that the rows of any
row-latin square form a set of permutations with covering radius at
least $n-2$ (a result recently obtained independently by Aharoni
\emph{et al.} \cite[Thm~1.16]{ABKZ18}).

\subsection*{Acknowledgements}

The authors are grateful to Ron Aharoni for bringing reference \cite{Lov75}
to their attention.

 
  \let\oldthebibliography=\thebibliography
  \let\endoldthebibliography=\endthebibliography
  \renewenvironment{thebibliography}[1]{%
    \begin{oldthebibliography}{#1}%
      \setlength{\parskip}{0.2ex}%
      \setlength{\itemsep}{0.2ex}%
  }%
  {%
    \end{oldthebibliography}%
  }

\end{document}